\theoremstyle{plain}
\newtheorem{theorem}{Theorem}[section]
\newtheorem{corollary}[theorem]{Corollary}
\newtheorem{lemma}[theorem]{Lemma}
\theoremstyle{definition}
\newtheorem{example}[theorem]{Example}
\newtheorem{remark}[theorem]{Remark}
\newcommand{\la}{\langle}
\newcommand{\ra}{\rangle}
\newcommand{\bbZ}{\mathbb Z}
\newcommand{\Z}{\mathbb Z}
\newcommand{\bbN}{\mathbb N}
\newcommand{\calP}{\mathcal {P}}
\newcommand{\calS}{\mathcal {S}}
\newcommand{\calQ}{\mathcal Q}
\newcommand{\calW}{\mathcal W}
\newcommand{\res}{\mathit{res}}
 \newcommand{\End}  {\operatorname{End}}
\newcommand{\Hom}{\operatorname{Hom}}
\newcommand{\Mon}{\operatorname{Mon}}
\newcommand{\im}{\operatorname{im}}
\newcommand{\ord} {\operatorname{ord}}
\newcommand{\Sub} {\operatorname{Sub}}
\begin{document}

\title{Subgroups which admit extensions of homomorphisms}
\author{Simion Breaz, Grigore C\u{a}lug\u{a}reanu and Phill Schultz}\thanks{S. Breaz is supported by the
CNCS-UEFISCDI grant PN-II-RU-TE-2011-3-0065}
\date{\today}

\address{(Breaz) "Babe\c s-Bolyai" University, Faculty of Mathematics
and Computer Science, Str. Mihail Kog\u alniceanu 1, 400084
Cluj-Napoca, Romania} \email{bodo@math.ubbcluj.ro}

\address{(C\u alug\u areanu) "Babe\c s-Bolyai" University, Faculty of Mathematics
and Computer Science, Str. Mihail Kog\u alniceanu 1, 400084
Cluj-Napoca, Romania} \email{calu@math.ubbcluj.ro}

 \address{(Schultz) School of Mathematics and Statistics, The University of Western Australia,
Nedlands, 6009, Australia} \email{phill.schultz@uwa.edu.au}

 \subjclass{primary 20K30; secondary 20K10, 16D50}

 \begin{abstract}
We classify by numerical invariants the finite subgroups $H$ of a
primary abelian group $G$ for which  every homomorphism or
monomorphism of $H$ into $G$, or every   endomorphism of $H$,
extends to an endomorphism of $G$. We apply these results to show
that for finitely generated subgroups of general abelian groups,
the extendibility of monomorphisms implies the extendibility of
all homomorphisms.
\end{abstract}

\keywords{abelian group; extending homomorphisms; Ulm sequence;
valuation}

\maketitle

\section{Introduction}

 We begin by characterizing in module theoretic terms
the extension properties described above.  The problem of
characterizing the subgroups satisfying these properties  forms
part of the more general question of characterizing special
classes of submodules of a module. For example,
  Birkhoff observed in \cite{B34} that although for
some rings $R$ (and in particular for $R=\Z$) finitely generated
modules can be completely characterized using  numerical
invariants, in general it is difficult to describe even cyclic
modules as submodules of a given module. A complex study in this
direction was initiated by   Ringel and  Schmidmeier \cite{RS08}
for artinian algebras, and it was continued by several authors who
show that in many cases the category of submodules is \lq wild\rq\
(cf. the introduction of \cite{RS06}). For the case of abelian
groups, we mention the studies realized in \cite{A00} and
\cite{RW99}.

 Let  $R$ be a unital ring and $M$   an $R$--module, and let $\Sub(M)$ be the set of all
submodules of $M$.

If $N\in\Sub(M)$, and $\iota:\, N\to M$ is the inclusion map, by
applying
  the contravariant functors $\Hom(-,M)$ and $\Hom(-,N)$, we obtain a commutative diagram:
\[\begin{CD} \Hom(M,N)@> \iota_M^*>>\End(M)\\
@V \res_N VV @VV \res_M V\\
\End(N) @>>\iota_N^*> \Hom(N,M)\end{CD},\] \noindent where
$\iota_X^*:\,\Hom(X,N) \to
\Hom(X,M)$ are the induced inclusion maps and
$\res_X:\,\Hom(M,X)\to\Hom(N,X)$ the induced restriction maps.

 Several module theoretic properties that appear in the literature in other guises can be described in terms of this diagram:    $\iota_M^*$ is an isomorphism if and only if
$N=M$; if $\res_M$ is an isomorphism then $M$ is a
 localization of $N$ (\cite{Du04}); $\res_N$ is
an isomorphism   if and only if $N$ is a direct summand with a unique
complement;  $\res_M$ factors through $\iota_N^*$  if and only if $N$ is fully invariant in $M$; for a given $N$, $\res_M$ and $\im^*_N$ have the same image for all $M$ if and only if $N$ is rigid \cite{DG96}.

Now consider, for a given module $M$,   the following sets of
submodules:
\begin{align*} \calS (M)&=\{ N\in \Sub(M):\, \res_N \textrm{ is epic}\};\\
\calQ (M)&=\{ N\in \Sub(M):\,  \res_M \textrm{ is epic}\};\\
 \calW (M)&=\{ N\in \Sub(M):\,  \im(\res_M)=\im(\iota^*_N)\};\\
 \calP(M)&=\{N\in\Sub(M): \im(\res_M)\supseteq\Mon(N,M)\}
\end{align*}
where $\Mon(N,M)$ is the set of monomorphisms of $N$ into $M$.

It is easy to see that $\calS(M)$ is the set of all direct
summands of $M$, $\calQ(M)$ is the set of all submodules $N$ of
$M$ such that all homomorphisms of $N$ into $M$ can be extended to
endomorphisms of $M$,   $\calW(M)$ is the class of all submodules
$N$ of $M$ such that all endomorphisms of $N$ can be extended to
endomorphisms of $M$.  and $\calP(M)$ is the class of all
submodules $N$ of $M$ for which all monomorphisms of $N$ into $M$
can be extended to endomorphisms of $M$.

Therefore, if one of these classes coincides with the set of all
submodules  of $M$, i.e., $\mathcal{X}(M)=\Sub(M)$ for
$\mathcal{X}=\mathcal{S}$, $\calQ$, $\calW$, or $\calP$ then $M$
is semi-simple, respectively quasi-injective (\cite{Fa67}),
weakly-injective (\cite{mis}) or pseudo-injective (\cite{JS67}).
Using  known results about the structure of these modules, it is
easy to see that in general $\calS(M)\subseteq \calQ(M) \subseteq
\calP(M)\subseteq \Sub(M)$ and all inclusions
  can be strict.
With one exception (the inclusion $\calQ(M)\subseteq \calP(M)$)
 this strictness can be demonstrated in the class of   abelian groups, using structure theorems in \cite{kil} and \cite{mis}. For
$\calQ(M)\subsetneqq \calP(M)$, there are pseudo-injective modules
which are not quasi-injective (see \cite{JS67} or \cite{T75}).
However, it is proved in \cite{sin} that over principal ideal
domains quasi-injective and pseudo-injective modules coincide,
 and we do not know if there exists an abelian group
$G$ such that $\calQ(G)\neq \calP(G)$.

Recently Er,  Singh and Srivastava \cite{ESS13} showed that
pseudo--injective modules are precisely those modules which are
invariant under automorphisms of their injective hulls.   This
description should be compared with the well known
characterization of quasi--injective modules as those modules
which are fully invariant, i.e., invariant under endomorphisms of
their injective hulls.

The object of the paper is to characterize finitely generated
subgroups which lie in these classes for the case of primary
abelian groups. Let $G$ be a $p$-group. Cyclic subgroups in
$\calQ(G)$ are described in Theorem \ref{3.14}, while finitely
generated subgroups in $\calQ(G)$ are characterized in Theorem
\ref{finite-Q(G)}.  Using these results we prove in Theorem
\ref{Pf=Qf} that for all abelian groups $G$ (not only for
quasi-injective or pseudo-injective abelian groups) we have
$\calQ_f(G)=\calP_f(G)$, where $\mathcal{X}_f(G)$ denotes the set
of all finitely generated subgroups in $\mathcal{X}(G)$. Theorem
\ref{finite-W(G)} gives us information about finitely generated
subgroups in $\calW(G)$.

All groups in this paper are abelian. Unless specifically noted we
use the standard notation of \cite{fuc1} and \cite{fuc2}.

\textsl{With the exception of the final section, in the rest of
this paper we assume that   $p$ is a fixed prime and $G$   a
$p$-group.}

If $G$ is bounded, we denote by $\exp(G)$ the least positive
integer $k$ such that $p^k G=0$. If $G$ is not bounded then
$\exp(G)=\infty$.  The exponent of an element $x\in G$,  denotes
the positive integer $\exp(x)$ such that the order of $x$ is
$p^{\exp(x)}$. If   $x\in G$, then $h(x)$ denotes the  height of
$x$.   A    group $G$ is \textit{homogeneous}  if it is a direct sum of isomorphic   quasi-cyclic
groups, i.e. $G\cong \bbZ(p^k)^{(\lambda)}$ where $k$ is a
positive integer  or $\infty$ and $\lambda$ is a
cardinal.

It is well known that the semi--simple  groups are the direct sums
  of elementary  groups.
Moreover, quasi-injective  (primary) groups are
exactly the homogeneous groups.

{  \section{Cyclic subgroups in $\calQ(G)$}\label{sect-cyclic}

In this section, we   consider the question:
 which cyclic subgroups of $G$ are in $\calQ(G)$ or in $\calP(G)$?

If $G$ is divisible, then for all $x\in G,\ \la x\ra\in\calQ(G)$,
so we need consider only non--divisible $G$. We recall that
 $G^1=\bigcap_{n>0}p^nG$ denotes {\sl the first Ulm
subgroup} of $G$.

\begin{lemma}\label{cyclic2}
Let $G$ be   non--divisible   and $\langle
x\rangle\in\calP(G)$.

\begin{enumerate}
\item If $G$ has an unbounded basic subgroup then $\langle
x\rangle\cap G^1=\{0\}$.

\item If $G=B\oplus D$ with $B$ bounded and $D$ divisible then either
\begin{enumerate} \item $\exp(x)\leq\exp(B)$ and $\langle x\rangle\cap D=0$ or
\item $\exp(x)>\exp(B)$ and $x=b+d$ with $b\in B$, $d\in D$ such that $\exp(B)=\exp(b)$
and $\exp(d)>\exp(B)$.
\end{enumerate}
\end{enumerate}
\end{lemma}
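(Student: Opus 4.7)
The plan in both parts is to suppose the stated conclusion fails and then exhibit a monomorphism $\phi:\langle x\rangle\to G$ that cannot be extended to $\End(G)$, contradicting $\langle x\rangle\in\calP(G)$. The obstruction always comes from the same source: for any $\hat\phi\in\End(G)$, homomorphisms cannot decrease height, so $\hat\phi(G^1)\subseteq G^1$. It will suffice, then, to produce $\phi$ for which a would-be extension maps some power $p^jx\in G^1$ to an element outside $G^1$.

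For (1), I would assume $\langle x\rangle\cap G^1\neq 0$, so that $p^{k-1}x\in G^1$ where $k=\exp(x)$. Since $G$ has an unbounded basic subgroup, a standard purity argument supplies a cyclic direct summand $\langle c'\rangle$ of $G$ of order $p^m$ with $m\geq k$. Setting $c=p^{m-k}c'$ (which has order $p^k$) and defining $\phi(x)=c$ yields a monomorphism; any extension $\hat\phi$ would send $p^{k-1}x\in G^1$ to $p^{m-1}c'$, which by purity of $\langle c'\rangle$ has height exactly $m-1$ in $G$ and therefore lies outside $G^1$, giving the contradiction.

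For (2), first observe that $G^1=D$. Write $x=b+d$ with $b\in B$, $d\in D$, and let $e=\exp(B)$, $k=\exp(x)$. The same recipe works in both cases: replace $b$ by a suitably chosen $b_0\in B$ and set $\phi(x)=b_0+d$. In case (a), where $k\leq e$, assuming $\langle x\rangle\cap D\neq 0$ and taking the minimal $j$ with $p^jx\in D$ forces $\exp(b)\leq j<\exp(d)=k$; choosing $b_0\in B$ of exponent exactly $k$ (possible since $k\leq e$) makes $\phi$ a monomorphism of order $p^k$, and then $\hat\phi(p^jx)=p^jb_0+p^jd$ is forced to have a nonzero $B$-component even though $p^jx\in D=G^1$, a contradiction. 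In case (b), where $k>e$, it is immediate that $\exp(d)=k$ and $\exp(b)\leq e$; to rule out $\exp(b)<e$, I would choose $b_0\in B$ of exponent $e$ and repeat the argument at $j=e-1$: then $p^{e-1}x=p^{e-1}d\in D$ while $\hat\phi(p^{e-1}x)=p^{e-1}b_0+p^{e-1}d$ has nonzero $B$-component $p^{e-1}b_0$, again a contradiction.

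The only subtle point is the design of the target element $b_0+d$: $b_0\in B$ must be picked with exactly the exponent that simultaneously (i) matches the order of $x$, so that $\phi$ is a well-defined monomorphism, and (ii) forces the $B$-component of $\hat\phi(p^jx)$ to be nonzero. Once the right $b_0$ is identified, the remaining checks (existence in $B$ of an element of the prescribed exponent, injectivity of $\phi$, and locating the correct $p^jx$ inside $D=G^1$) are routine.
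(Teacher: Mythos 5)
Your proof is correct and takes essentially the same route as the paper's: in each case you send $\langle x\rangle$ monomorphically onto an element of the same order that sits in a cyclic summand (resp.\ onto $b_0+d$ with $b_0\in B$ of maximal admissible exponent), extend, and contradict the fact that endomorphisms cannot decrease heights, i.e.\ must carry $G^1$ into $G^1$. In case (2)(b) you are in fact a little more careful than the paper, which concludes $\exp(b)=\exp(B)$ from $h(x)=0$ alone, whereas your inspection of $p^{e-1}x$ is the step that genuinely forces the exponent, not just the height, of $b$.
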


\begin{proof}

(1) If $G$ has an unbounded basic subgroup then $G$ has a
cyclic direct summand $\langle a\rangle$ of order $\geq \ord(x)$.
Hence there is a monomorphism $\langle x\rangle\to \langle
a\rangle$ which can be extended to an endomorphism of $G$. It
follows that the heights of all non-zero elements of $\langle
x\rangle$ are finite.

(2) Suppose $G=B\oplus D$ with $\exp(B)=n$   and $D$ divisible.

(a) If $\exp(x)\leq n$ then   as in (1) $G$ has a
cyclic direct summand   of order $\geq \ord(x)$ and
non-zero elements from $\langle x\rangle$ have finite height.

(b) Suppose that $\exp(x)> n$, and $x=b+d$ with $b\in B$ and $d\in
D$. Then $\exp(d)>n\geq \exp(b)$. Let $y\in B$ have exponent $n$.
Then $\exp(y+d)=\exp(d)=\exp(x)$, and there exists $f\in\End( G)$
such that $f(x)=y+d$. Since $h(y)=0$, it follows that $h(x)=0$,
hence $\exp(b)=n$.
\end{proof}

\begin{lemma}\label{sufficient}
If $G$ and $x$ are as in Lemma {\rm \ref{cyclic2}(2)(b)} then
$\langle x\rangle\in\calQ(G)$.
\end{lemma}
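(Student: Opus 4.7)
The plan is to start with an arbitrary homomorphism $\phi:\langle x\rangle\to G$ and construct an extension $f\in\End(G)$ by defining $f$ separately on the two summands $B$ and $D$. Writing $\phi(x)=b'+d'$ with $b'\in B$ and $d'\in D$, I need $f(b)=b'$ (up to adjustments from $B'$) and $f(d)=d'$, since then $f(x)=f(b)+f(d)=b'+d'=\phi(x)$. The only constraints are that both assignments extend to honest homomorphisms.

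First I would exploit the divisible side. From $\exp(\phi(x))\le\exp(x)=\exp(d)$ and $\exp(b'+d')=\max\{\exp(b'),\exp(d')\}$ in $B\oplus D$, I get $\exp(d')\le\exp(d)$, so the assignment $d\mapsto d'$ defines a well-formed homomorphism $\langle d\rangle\to D$. Because $D$ is divisible and hence injective, this extends to some $f_D\in\End(D)$.

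Next I would handle the bounded side by showing $\langle b\rangle$ is a direct summand of $B$. The proof of Lemma \ref{cyclic2}(2)(b) gives $h_G(x)=0$; since $h_G(d)=\infty$ (every element of a divisible $p$-group has infinite height), this forces $h_B(b)=0$. Combined with $\exp(b)=\exp(B)=n$, a routine argument using a cyclic decomposition $B=\bigoplus_i\langle e_i\rangle$ (Prüfer/Baer) shows that some $e_i$ of order $p^n$ appears with a unit coefficient in $b$, so $b$ can be substituted for $e_i$ in the basis; hence $B=\langle b\rangle\oplus B'$ for some complement $B'$.

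With the decomposition in hand, define $f_B\in\End(B)$ by $f_B(b)=b'$ and $f_B|_{B'}=0$; this is well defined because $\exp(b')\le\exp(B)=n=\exp(b)$. Setting $f=f_B\oplus f_D$ on $G=B\oplus D$ then gives $f(x)=b'+d'=\phi(x)$, so $\phi$ extends to $f\in\End(G)$ and $\langle x\rangle\in\calQ(G)$. I expect the only mildly delicate step to be the summand claim $B=\langle b\rangle\oplus B'$; everything else is a direct consequence of injectivity of $D$ and bookkeeping of exponents.
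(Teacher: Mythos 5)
Your proposal is correct and follows essentially the same route as the paper: split $\phi(x)$ into its $B$- and $D$-components, use that $\langle b\rangle$ is a direct summand of $B$ (the paper gets this directly from $\exp(b)=\exp(B)$, since an element of maximal order in a bounded $p$-group generates a pure, hence split, cyclic subgroup — your height-zero/basis-exchange detour is sound but not needed) and that $D$ is injective, then assemble the two extensions. The only cosmetic difference is that the paper extends the component maps to homomorphisms $B\to G$ and $D\to G$ rather than endomorphisms of $B$ and $D$, which changes nothing.
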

\begin{proof}
Since $\exp(b)=\exp(B),\  \langle b\rangle$ is a direct summand of $B$.

Let $f\in\Hom(\la x\ra,G)$ with $f(x)=a+e$, where $a\in B$ and $e\in D$. Since $\la b\ra$ is a summand of $B$ and $\exp(b)\geq\exp(a)$, the map $f_1$ defined by $f_1(b)=a$ extends to $g_1\in \Hom(B,G)$. Since $D$ is injective and $\exp(d)=\exp(x)\geq\exp(e)$, the map $f_2 $ defined by $f_2(d)=e$ extends to $g_2\in\Hom(D,G)$. Hence $g=g_1+g_2$ is an extension of $f$ to $\End(G)$.
\end{proof}

It remains to find intrinsic criteria for
cyclic groups  satisfying the conditions of Lemma \ref{cyclic2}
(1) and (2) (a) to be in $\calQ(G)$. We consider therefore cyclic
groups $\la x\ra$ containing no elements of infinite height in
$G$.

Recall (\cite[Section 65]{fuc2}) that for $n\in\bbN$, an
\textit{Ulm sequence of   length $n$} is a  strictly increasing
infinite  sequence $U=(h_0,h_1,\dots,h_{n-1},\infty,\dots)$ with
each $h_i$ an ordinal,  under the conventions that each ordinal
$h_i<\infty,\ \infty< \infty$ and the constant   sequence
$(\mathbf{\infty})$ is the unique Ulm sequence of length 0. The
set of Ulm sequences   is well--ordered pointwise with maximum
$(\mathbf     \infty)$, no minimum but infimum $\bbN=(0,1,\dots,
n,n+1,\dots)$. This means in particular  that if $U\leq V$ where
$U$ has length $n$ and $V$ has length $m$, then $n\geq m$. An Ulm
sequence $U$ is called \textit{finite} if all its non--infinity
entries are finite.  In particular, $(\infty)$ is a finite Ulm
sequence.

  By Lemma \ref{cyclic2}  we have:
 \begin{corollary}\label{pseudo} If $\la x\ra\in\calP(G)$ then $U(x)$ is finite.
\qed\end{corollary}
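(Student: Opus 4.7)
The plan is to obtain the corollary as an immediate consequence of Lemma~\ref{cyclic2}, after treating the divisible case separately (since the lemma assumes $G$ non-divisible). The claim ``$U(x)$ is finite'' unpacks to: no nonzero $p^ix$ has a transfinite-but-not-$\infty$ height in $G$; equivalently, every ordinal entry of $U(x)=(h(x),h(px),h(p^2x),\ldots)$ is a natural number.

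If $G$ is divisible then $h(x)=\infty$ for every $x$, so $U(x)=(\infty,\infty,\ldots)$ is the length-zero Ulm sequence, finite by convention. Otherwise, run the structural dichotomy of Lemma~\ref{cyclic2}. When $G$ has an unbounded basic subgroup, part~(1) gives $\la x\ra\cap G^1=\{0\}$, which immediately forbids transfinite heights among the nonzero $p^ix$. When $G=B\oplus D$ with $B$ bounded of exponent $n$ and $D$ divisible, apply part~(2): in subcase~(a), $\la x\ra\cap D=\{0\}$ forces every nonzero $p^ix$ to have a nonzero $B$-component, so its height in $G$ coincides with its height in $B$, hence is finite; in subcase~(b), writing $x=b+d$ with $\exp(b)=n<\exp(d)$, one checks that for $i<n$ the height equals $h_B(p^ib)<\omega$, while for $n\le i<\exp(d)$ we have $p^ix=p^id\in D$ and so $h(p^ix)=\infty$, and $p^ix=0$ thereafter. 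In every case the non-$\infty$ entries of $U(x)$ are natural numbers.

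The only subtlety worth flagging is the reading of the definition: a finite Ulm sequence is permitted to jump directly from a finite ordinal entry to $\infty$, and this is precisely what happens at position $n$ in subcase~(2)(b). What is ruled out is a transfinite-but-not-$\infty$ entry, and no such entry arises above. No further obstacle is expected: Lemma~\ref{cyclic2} has already done the structural work, and the corollary amounts to restating it in the language of Ulm sequences.
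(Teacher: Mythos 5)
Your proposal is correct and follows exactly the route the paper intends: the corollary is stated with a \qed precisely because it is read off from Lemma~\ref{cyclic2}, and your case analysis (divisible $G$ by convention; part (1) via $\la x\rangle\cap G^1=0$; parts (2)(a),(b) via the $B\oplus D$ decomposition) is the intended unpacking. Your remark that $\infty$ entries are permitted in a ``finite'' Ulm sequence, so that only transfinite-but-not-$\infty$ heights must be excluded, is the right reading of the definition and handles case (2)(b) correctly.
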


We say that the Ulm sequence \textit{$U$ has a gap before   $k$}
if $h_k
>h_{k-1}+1$, where   $h_{-1}$ denotes
by definition the integer $-1$. The gap before   $n$, where $n$ is
the length of $U$, is called \textit{the trivial gap}.

Let $x \in G$ with $\exp(x)=n$. Then $x$ determines an Ulm
sequence of length $n$ by
$U(x)=(h(x),\,h(px),\dots,h(p^{n-1}x),\,\infty,\dots)$. It is
clear from this definition that $U (x)$ is finite if and only if
$\la x\ra\cap G^1=\{0\},\ h(p^kx)=\infty$ if and only if $p^kx\in
D$, the divisible part of $G,\ U(x)=(0,1,\dots,n-1,\infty,\dots)$
if and only if $\la x\ra$ is a summand of exponent $n$ and for
$x,\ y\in G,\ U(x+y)\geq\min\{U(x),\, U(y)\}$.
Finally, note that by \cite[Lemma 65.3]{fuc2}, if $h(x)=0$ and
$U(x)$ has the first non--trivial gap before $k$, then $G$ has a
direct summand of exponent $k$.

By \cite[Lemma 65.5 and Exercise 6]{fuc2} we have:

\begin{lemma}\label{65.5}
Let $G$ be a  group and $x\in G$ such that $\langle
x\rangle\cap G^1=0$.
 \begin{enumerate} \item The following are
equivalent:
 \begin{enumerate} \item $\langle x\rangle\in\calQ(G)$;

\item if $y\in G$ such that $\exp(x)\geq \exp(y)$ then $U(x)\leq
U(y)$. \end{enumerate}

\item The following are equivalent:
 \begin{enumerate} \item $\langle x\rangle\in\calP(G)$;

\item if $y\in G$ such that $\exp(x)= \exp(y)$ then $U(x)\leq
U(y)$.\qed\end{enumerate}
\end{enumerate}
\end{lemma}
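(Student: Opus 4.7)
The forward direction of both parts rests on the elementary observation that every $\phi\in\End(G)$ satisfies $h(\phi(z))\ge h(z)$ for all $z\in G$, equivalently $U(z)\le U(\phi(z))$. For (1)(a)$\Rightarrow$(b), if $\exp(y)\le\exp(x)$ then $x\mapsto y$ is a well-defined element of $\Hom(\langle x\rangle,G)$; extending to $\tilde f\in\End(G)$ and applying the observation to each $p^i x$ gives $U(x)\le U(y)$. For (2)(a)$\Rightarrow$(b), the condition $\exp(y)=\exp(x)$ makes $x\mapsto y$ a monomorphism, and the same argument works with $\calP(G)$ in place of $\calQ(G)$.

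Both converses reduce to the following extension statement~$(\ast)$: \emph{if $U(x)$ is finite and $y\in G$ satisfies $U(x)\le U(y)$, then there exists $\phi\in\End(G)$ with $\phi(x)=y$}. Granting $(\ast)$, take an arbitrary $f\in\Hom(\langle x\rangle,G)$ and set $y:=f(x)$; then $\exp(y)\le\exp(x)$, so the hypothesis of (1)(b) yields $U(x)\le U(y)$ and $(\ast)$ produces the desired extension of $f$. Likewise, any $f\in\Mon(\langle x\rangle,G)$ sends $x$ to an element $y$ of the same exponent as $x$, so combining (2)(b) with $(\ast)$ gives (2)(a).

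The statement $(\ast)$ is the substance of \cite[Lemma 65.5 and Exercise~6]{fuc2} and is the main obstacle of the proof. My plan is to induct on $\exp(x)$ using Lemma~65.3 of \cite{fuc2} as the engine. Let $k$ be the position of the first non-trivial gap of $U(x)$. Since $U(x)$ is finite (equivalently $\langle x\rangle\cap G^1=0$), Lemma~65.3 extracts a cyclic direct summand $\langle c\rangle$ of $G$ of exponent $k$ whose Ulm sequence matches the initial segment of $U(x)$ up to that gap. Writing $G=\langle c\rangle\oplus G'$, one decomposes $x=\lambda c+x'$ with $x'\in G'$ and checks that $U(x')$ is still finite but of strictly shorter length; the hypothesis $U(x)\le U(y)$ permits a parallel decomposition $y=\mu c+y'$ with $U(x')\le U(y')$ in $G'$. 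The inductive hypothesis applied to $(x',y')\in G'\times G'$ yields $\phi'\in\End(G')$ with $\phi'(x')=y'$, and defining $\phi$ on $\langle c\rangle$ by $c\mapsto \mu c$ produces the required endomorphism of $G$.

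The delicate point throughout is arranging the decomposition of $x$ along $\langle c\rangle\oplus G'$ so that $x'$ has strictly shorter Ulm sequence and still admits a compatible decomposition of $y$; the finiteness of $U(x)$ and the explicit structure of its gaps are what make this possible, and it is precisely where the hypothesis $\langle x\rangle\cap G^1=0$ is indispensable.
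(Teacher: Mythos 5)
Your forward directions and the reduction of both converses to the extension statement $(\ast)$ are exactly right, and they mirror what the paper does: the paper offers no proof at all, simply quoting the result from Fuchs (Lemma 65.5 and Exercise 6 of \cite{fuc2}), and your $(\ast)$ is precisely the content of that citation. Your sketch of $(\ast)$ follows the standard Fuchs strategy (extracting cyclic summands from the gaps of $U(x)$ via Lemma 65.3/65.4), but one step is misstated: after writing $G=\langle c\rangle\oplus G'$ with $\exp(c)=k<\exp(x)$ and $x=\lambda c+x'$, the element $x'$ still has exponent $\exp(x)$, so $U(x')$ does \emph{not} have strictly shorter length; the correct induction measure is the number of non-trivial gaps of $U(x)$, which does drop by one. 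The other point you flag as delicate --- producing $\mu$ and $y'$ with $U(x')\le U(y')$ from $U(x)\le U(y)$ --- is indeed the real work (one uses the height inequalities to divide the relevant component of $y$ by $p^{k_i}$ and defines $\varphi$ on the generators $c_i$ rather than decomposing $y$ along the given summand); as written it is asserted rather than proved, but since the paper itself delegates all of this to Fuchs, your proposal is a faithful (if slightly imprecise) expansion of the same route rather than a different one.
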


Using this result we can characterize cyclic groups $\la x\ra$ with no elements of infinite height in $\calQ(G)$ as follows:

\begin{theorem}\label{3.14}
Let $G$ be a  group and $x\in G$ an element of exponent $n$ such that $\la x\ra\cap G^1=\{0\}$.
The following are equivalent:
\begin{enumerate}
\item $\langle x\rangle\in\calQ(G)$;

\item $\langle x\rangle\in\calP(G)$;

\item $U(x)$ has at most one non-trivial gap and if a gap occurs
before the index $k\geq 0$ and $h(p^{k}x)=k+\ell$,  then $G$ has no
cyclic   summands of exponents between  $k+1$ and $n+\ell-1$.
\end{enumerate}
\end{theorem}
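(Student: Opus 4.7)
The plan is to prove (1) $\Rightarrow$ (2) $\Rightarrow$ (3) $\Rightarrow$ (1). The first implication is immediate since every monomorphism is a homomorphism. By Lemma~\ref{65.5}, (1) translates to $U(x)\leq U(y)$ for every $y\in G$ with $\exp(y)\leq n$, and (2) to the analogous statement with $\exp(y)=n$. I will prove (2) $\Rightarrow$ (3) in two steps, first the no-summand condition for every non-trivial gap of $U(x)$, then uniqueness of the gap, and finally tackle (3) $\Rightarrow$ (1).

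For the no-summand half of (2) $\Rightarrow$ (3), fix a non-trivial gap of $U(x)$ before $k$ with $h(p^kx)=k+\ell$, and suppose for contradiction that $G=\la a\ra\oplus G'$ with $h(a)=0$ and $k+1\leq \exp(a)=m\leq n+\ell-1$. If $m\geq n$, set $y=p^{m-n}a$; then $\exp(y)=n$ and $h(p^ky)=k+(m-n)<k+\ell$. If $m<n$, decompose $x=x_a+x'$ along the direct sum; since $\exp(x_a)\leq m<n=\exp(x)$, necessarily $\exp(x')=n$, and for $y:=a+x'$ the direct-sum height formula gives $h(p^iy)=\min(i,h(p^ix'))=i$ for every $i<m$, in particular $h(p^ky)=k<k+\ell$. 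Both cases contradict $U(x)\leq U(y)$ at index $k$.

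The hard part will be uniqueness of the gap. Suppose $U(x)$ has non-trivial gaps before $k_1<k_2$ with $h(p^{k_i}x)=k_i+\ell_i$, so $\ell_2>\ell_1\geq 1$. Using $h(p^{k_1}x)=k_1+\ell_1$, pick $w_0\in G$ with $h(w_0)=0$ and $p^{k_1+\ell_1}w_0=p^{k_1}x$; its exponent equals $n+\ell_1$. For every $i$ in $[k_1+\ell_1,\ n+\ell_1-1]$ the identity $p^iw_0=p^{i-\ell_1}x$ gives $h(p^iw_0)=h(p^{i-\ell_1}x)$; combined with $h(w_0)=0$ and monotonicity of heights under multiplication by $p$, this forces $h(p^iw_0)=i$ for all $i\leq k_2+\ell_1-1$ and $h(p^{k_2+\ell_1}w_0)=k_2+\ell_2\geq k_2+\ell_1+1$. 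Thus the first non-trivial gap of $U(w_0)$ is before $k_2+\ell_1$, and \cite[Lemma~65.3]{fuc2} produces a cyclic summand of $G$ of exponent $k_2+\ell_1\in[k_1+1,n+\ell_1-1]$, contradicting the no-summand condition already established for the first gap.

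For (3) $\Rightarrow$ (1): if $U(x)$ has no non-trivial gap, then $\la x\ra$ is a pure bounded cyclic subgroup and hence a direct summand of $G$, so $\la x\ra\in\calQ(G)$ trivially. Otherwise, with the unique gap before $k$ and $h(p^kx)=k+\ell$, Lemma~\ref{65.5}(1) reduces the task to checking $U(x)\leq U(y)$ for every $y\in G$ with $\exp(y)\leq n$. This inequality is automatic below index $k$ and propagates above via $h(p^{i+1}y)\geq h(p^iy)+1$, so everything reduces to showing $h(p^ky)\geq k+\ell$ whenever $z:=p^ky\neq 0$. Suppose instead $h(z)=j<k+\ell$; since $h(z)\geq k$, in fact $k\leq j\leq k+\ell-1$. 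Lift $z=p^jw_0$ with $h(w_0)=0$; then $\exp(w_0)=j+\exp(y)-k$ and $h(p^iw_0)=i$ for $i\leq j$ by the same monotonicity principle. Applying \cite[Lemma~65.3]{fuc2} (or, if $U(w_0)$ has no non-trivial gap, splitting off the bounded pure cyclic $\la w_0\ra$) yields a cyclic summand of $G$ of some exponent $e$ with $j+1\leq e\leq\exp(w_0)$; using $j\geq k$, $\exp(y)\leq n$, and $j\leq k+\ell-1$, we conclude $e\in[k+1,n+\ell-1]$, contradicting (3).
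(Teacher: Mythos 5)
Your proof is correct, but both hard implications are argued by a genuinely different route than the paper's. For (2)$\Rightarrow$(3) the paper first rules out a second gap by invoking \cite[Lemma 65.4]{fuc2} to write $x=p^{k_1}c_1+\dots+p^{k_t}c_t$ inside a finite direct summand and perturbing one coefficient, and only then derives the no-summand condition; you reverse the order: you prove the no-summand condition for \emph{every} non-trivial gap (by essentially the same two perturbations the paper uses in its second half), and then obtain uniqueness of the gap as a consequence, by lifting $p^{k_1}x$ to a height-zero element $w_0$ and applying \cite[Lemma 65.3]{fuc2} to manufacture a summand of the forbidden exponent $k_2+\ell_1$. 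For (3)$\Rightarrow$(1) the paper splits into two cases according to whether the last entry of $U(y)$ is finite or infinite, using a Lemma 65.4-type analysis in the first case and a basic-subgroup decomposition in the second; you avoid the case distinction entirely by reducing everything to the single inequality $h(p^ky)\geq k+\ell$ and, when it fails, lifting $z=p^ky$ to a height-zero $w_0$ and again invoking Lemma 65.3 to produce a forbidden summand. The net effect is that your argument needs only Lemma 65.3 (which the paper already quotes in exactly the form you use) rather than the heavier Lemma 65.4, and it treats elements $y$ with $\langle y\rangle\cap G^1\neq 0$ uniformly, which is a real simplification. Two small points to tighten: in the uniqueness step you should say explicitly that $k_1<k_2$ are taken to be \emph{consecutive} (say the first two) non-trivial gaps, since the squeeze giving $h(p^iw_0)=i$ for all $i\leq k_2+\ell_1-1$ uses $h(p^jx)=j+\ell_1$ for $k_1\leq j<k_2$; and in the final step it is worth noting that the exponent $e$ of the summand produced satisfies $e\leq\exp(w_0)-1$ when it comes from a non-trivial gap and $e=\exp(w_0)$ in the pure case, both of which sit inside $[k+1,n+\ell-1]$ as you compute.
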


\begin{proof}
(1)$\Rightarrow$(2) This is obvious.

(2)$\Rightarrow$(3) Let $x\in G$ such that $\langle
x\rangle\in\calP(G)$. If $U(x)$ has no non-trivial gaps then
$\langle x\rangle$ is a direct summand of $G$. Therefore we can
assume that $U(x)$ has at least one non-trivial gap.

Suppose that $U(x)=(h_0,\dots,h_{n-1},\infty,\dots)$ has at least
two non-trivial gaps. Since all height $h_i$ are integers, we can
apply \cite[Lemma 65.4]{fuc2}, and it follows that there is a
direct summand $C=\langle c_1\rangle\oplus \dots  \oplus \langle
c_t\rangle$ of $G$ and a strictly increasing chain of positive
integers $0<k_1<k_2<\dots<k_t$ such that
\begin{enumerate} \item[(i)] $t\geq 3$, \item[(ii)]
$\exp(c_1)<\exp(c_2)<\dots<\exp(c_t)=k_t+n$, and \item[(iii)]
$x=p^{k_1}c_1+p^{k_2}c_2+\dots +p^{k_t}c_t$.\end{enumerate}

We observe that the exponent of
$$y=p^{k_1}c_1+p^{k_2-1}c_2+p^{k_3}c_3\dots +p^{k_t}c_t$$ is $n$.
But
$$h(p^{\exp(c_1)-k_1}y)=\exp(c_1)-k_1+k_2-1<\exp(c_1)-k_1+k_2=h(p^{\exp(c_1)-k_1}x),$$
hence $U(y)\ngeq U(x)$, a contradiction.

Therefore $U(x)$ has exactly one non-trivial gap. Let $k$ be the
index such that $U(x)$ has a gap before $k$. Hence
$h(p^kx)=k+\ell$ with $\ell>0$.

Suppose that $\langle z\rangle$ is a direct summand of $G$ of
exponent $n\leq e\leq n+\ell-1$. If $v=p^{e-n} z$ then $p^{k}v\neq
0$ since $n>k$. Moreover,
$$h(p^k v)=e-n+k\leq n+\ell-1-n+k=k+\ell-1<h(p^k x).$$ Therefore
$U(v)\ngeqq U(x)$, but $\exp(v)=\exp(x)$, a contradiction.

Suppose that $\langle z\rangle$ is a direct summand of $G$ of
exponent $k+1\leq e\leq n-1$. We observe that $v=x+z$ is of
exponent $n$. But $h(p^kx)>k=h(p^kz)$, hence
$$h(p^kv)=h(p^kx+p^kz)=k<h(p^kx),$$ and it follows that $U(v)\ngeqq
U(x)$. This leads to a contradiction and the proof is complete.

(3)$\Rightarrow$(1) Let $x$ be as in (3). If $U(x)$ has no
nontrivial gaps then $\langle x\rangle$ is a direct summand of
$G$.

Suppose that $U(x)$ has a gap before the index $k$, and we fix an
element $y$ of exponent $e\leq \exp(x)$. We will prove that
  $U(x)\leq U(y)$.

We consider the Ulm sequence $U(y)=(r_0,\dots,r_{e-1},\infty,\dots)$.

\textbf{Case I:   $r_{e-1}$ is finite.} In
order to prove that $U(x)\leq U(y)$, since $U(x)$ has only one gap
and this occurs before $k$, it is enough to prove that $h(p^k
x)\leq h(p^k y)$.

Suppose by contradiction that $h(p^k x)> h(p^k y)$. As in
\cite[Lemma 65.4]{fuc2}, if $n_1,\dots,n_t$ are the positive
indexes before the gaps occur and we set $r_{n_i}=n_i+k_{i+1}$ and
$k_1=r_0$ then we have cyclic direct summands of exponent
$n_i+k_i$, with $i=1,\dots,t$.

If $k=0$ then we have no cyclic direct summands of exponent
$1,\dots,n+\ell-1$. Then every element $y$ of exponent $\leq n$
must have   height $\geq \ell$.

If $k>0$, let $n_j\leq k$ be the largest index $n_i\leq k$. Then
$h(p^k y)=k+k_{j+1}<k+ \ell$ and $G$ has a direct summand of
exponent $n_{j+1}+k_{j+1}$. Since $k<n_{j+1}\leq n$, we obtain
that $G$ has a cyclic direct summand of exponent $e$ with
$k<e<n+\ell$, a contradiction.

\textbf{Case II: $r_{e-1}$ is infinite.} Let $u=h(p^{n-1}x)$. If
$B=\bigoplus_{i>0}B_i$ is a basic subgroup of $G$, where $B_i$ are
homogeneous subgroups of exponent $i$, we consider the direct
decomposition $G=B_1\oplus\dots \oplus B_u\oplus p^u G$ and write
$y=y_1+\dots +y_u+y^*$ with $y_i\in B_i$ for all $i=1,\dots,u$ and
$y^*\in p^u G$. It is obvious that $U(x)\leq U(y^*)$. Moreover,
$y-y^*$ satisfies Case I and $\exp(y-y^*)<\exp(x)$. Therefore
$U(x)\leq\min\{U(y-y^*),U(y^*)\}$, and it follows that $U(x)\leq
U(y)$.
\end{proof}

\begin{corollary} Let $\la  x\ra\in\calQ(G)$. If $U(x)$ has no non--trivial gap then $\la x\ra\in\calS(G)$. If $U(x)$ has a non--trivial gap at   $k$, then $x\in H$,  a summand of $G$, where $H$ is cyclic if $k=0$  and finite of rank $2$ otherwise.
\qed\end{corollary}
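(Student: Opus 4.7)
The plan is to apply Theorem~\ref{3.14}, which forces $U(x)$ to have at most one non-trivial gap, and to construct the summand $H$ according to the possible gap configurations. If $U(x)$ has no non-trivial gap, then $U(x)=(0,1,\dots,n-1,\infty,\dots)$ where $n=\exp(x)$, and the criterion recorded after the definition of $U(x)$ in the excerpt gives immediately $\la x\ra\in\calS(G)$, settling the first assertion with $H=\la x\ra$.

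Suppose now that the unique non-trivial gap occurs before some index $k$ with $h(p^k x)=k+\ell$, and first take $k=0$. Then $h(x)=\ell>0$ and $U(x)=(\ell,\ell+1,\dots,\ell+n-1,\infty,\dots)$. I would pick any $y\in G$ with $p^\ell y=x$: the equality $h(x)=\ell$ forces $h(y)=0$, and $p^{n+\ell-1}y=p^{n-1}x\neq 0$ together with $p^{n+\ell}y=0$ gives $\exp(y)=n+\ell$. For $\ell\le i\le n+\ell-1$ the relation $p^i y=p^{i-\ell}x$ yields $h(p^i y)=i$; for $0\le i\le\ell-1$ the chain $0=h(y)<h(py)<\cdots<h(p^\ell y)=\ell$ has $\ell$ increments each at least $1$ summing to $\ell$, hence each exactly $1$, so again $h(p^i y)=i$. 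Thus $U(y)=(0,1,\dots,n+\ell-1,\infty,\dots)$, and $H=\la y\ra$ is a cyclic direct summand of $G$ containing $x$.

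Finally, if $k\ge 1$, then $U(x)=(0,1,\dots,k-1,k+\ell,\dots,n+\ell-1,\infty,\dots)$, and $U(p^k x)=(k+\ell,k+\ell+1,\dots,n+\ell-1,\infty,\dots)$ falls under the previous case. Applying the construction above to $p^k x$ produces $z\in G$ with $p^{k+\ell}z=p^k x$, $\exp(z)=n+\ell$, and $\la z\ra\in\calS(G)$. Writing $G=\la z\ra\oplus G_1$ and decomposing $x=\alpha z+x_1$ with $\alpha\in\Z$ and $x_1\in G_1$, the identity $p^k x=p^{k+\ell}z$ forces $\alpha\equiv p^\ell\pmod{p^{n+\ell-k}}$ and $p^k x_1=0$; comparing the heights of $p^i x$ (which equals $i$ for $0\le i\le k-1$) with $h(p^i\alpha z)=i+\ell$ then yields $h(p^i x_1)=i$. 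Hence $U_{G_1}(x_1)=(0,1,\dots,k-1,\infty,\dots)$ by purity of $G_1$, so $\la x_1\ra$ is a cyclic summand of $G_1$; writing $G_1=\la x_1\ra\oplus G_1'$ gives $G=\la z\ra\oplus\la x_1\ra\oplus G_1'$, so $H=\la z\ra\oplus\la x_1\ra$ is the desired finite rank-$2$ direct summand containing $x=\alpha z+x_1$. The main obstacle is the height bookkeeping in this last step; everything else follows cleanly from Theorem~\ref{3.14} and the lifting argument of the $k=0$ case.
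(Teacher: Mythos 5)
Your argument is correct, granting (as the paper implicitly does) that the corollary is read in the standing context of Theorem~\ref{3.14}, i.e.\ $\la x\ra\cap G^1=0$, so that all the heights you manipulate are finite integers. The paper itself gives no proof: the corollary is left as immediate, the intended justification being Theorem~\ref{3.14} together with \cite[Lemma 65.4]{fuc2} (already invoked in the proof of that theorem), which for an element of height $0$ with finite Ulm sequence produces a direct summand $\la c_1\ra\oplus\dots\oplus\la c_t\ra$ containing $x$ with one cyclic factor per gap; since Theorem~\ref{3.14} limits $U(x)$ to at most one non-trivial gap besides the trivial one, this gives $t\le 2$, and $t=1$ when the unique non-trivial gap is before $k=0$ (after replacing $x$ by a $y$ with $p^{\ell}y=x$). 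What you have done is reprove that special case of Fuchs's lemma by hand: the lift $p^{\ell}y=x$ with the ``$\ell$ increments each at least $1$ summing to $\ell$'' count, and, for $k\ge 1$, the splitting $x=\alpha z+x_1$ with $h(p^ix)$ computed as a minimum over the two summands. The bookkeeping checks out: $\alpha\equiv p^{\ell}\pmod{p^{n+\ell-k}}$ together with $k<n$ gives $h(p^i\alpha z)=i+\ell$ exactly, and $x_1\neq 0$ because otherwise $h(x)=\ell>0$ would contradict the absence of a gap before $0$. So your route is the same in substance but self-contained where the paper outsources the construction to \cite[Lemma 65.4]{fuc2}; you gain independence from that citation at the cost of brevity.
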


}

It is easy to see that $\calQ(G)$ is closed with respect direct
summands.

\begin{corollary} The set  $\calQ(G)$ is not closed under direct sums, even in the case that $G$ is a finite $p$--group.
\end{corollary}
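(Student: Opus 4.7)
The plan is to exhibit a finite $p$-group $G$ and two disjoint cyclic subgroups in $\calQ(G)$ whose internal direct sum fails to lie in $\calQ(G)$. A natural candidate is $G=\bbZ(p^2)\oplus\bbZ(p^3)$, with $a$ and $b$ generating the two summands; I take $H_1=\la a\ra$ and $H_2=\la a+pb\ra$.

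I would first verify $H_1,H_2\in\calQ(G)$. The subgroup $H_1$ is visibly a direct summand of $G$, so $H_1\in\calS(G)\subseteq\calQ(G)$. For $H_2$, compute the Ulm sequence of $a+pb$: since $h(a+pb)=0$, $h(pa+p^2b)=\min(1,2)=1$ and $p^2(a+pb)=0$, one gets $U(a+pb)=(0,1,\infty,\dots)$, which has no non-trivial gap. By Theorem \ref{3.14} (or the corollary immediately preceding the statement), $H_2$ is also a direct summand of $G$, and in particular $H_2\in\calQ(G)$.

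Next I would check by a short order comparison that $H_1\cap H_2=0$, so that the internal sum $H_1\oplus H_2$ is legitimate, and identify it with $\la a,pb\ra\cong\bbZ(p^2)\oplus\bbZ(p^2)$. To conclude, I would exhibit an explicit obstruction to membership in $\calQ(G)$: the map $f\colon\la a\ra\oplus\la pb\ra\to G$ given by $f(a)=0$ and $f(pb)=a$ is a well-defined homomorphism because both $a$ and $pb$ have order $p^2$, yet any extension $F\in\End(G)$ would satisfy $a=F(pb)=pF(b)\in pG$, contradicting $h(a)=0$ (note $pG=\la pa,pb\ra$).

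The real obstacle is simply spotting the right example; the verifications are then routine applications of Theorem \ref{3.14} together with the fact that endomorphisms cannot raise heights. The conceptual content is that the height profile that makes $a+pb$ a summand of $G$ is lost once one adjoins $a$: inside $H_1\oplus H_2\cong\bbZ(p^2)^2$ the element $pb$ can be sent freely to the height-$0$ element $a$, a move no endomorphism of $G$ can realize.
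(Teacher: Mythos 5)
Your proof is correct, but it takes a genuinely different route from the paper's. The paper's own argument is a one-line sketch pitched at the level of Theorem \ref{3.14}: it takes a cyclic $\la x\ra\in\calQ(G)$ whose Ulm sequence has a non-trivial gap (so $\la x\ra$ is \emph{not} a direct summand of $G$) together with a cyclic $\la y\ra$ of exponent $k+1$, and leaves to the reader both the verification that $\la y\ra\in\calQ(G)$ and the reason the sum fails. You instead work in the smallest concrete group $G=\bbZ(p^2)\oplus\bbZ(p^3)$ with two subgroups that are actual direct summands of $G$ (hence in $\calS(G)\subseteq\calQ(G)$ with no appeal to the gap criterion), and you exhibit an explicit non-extendable homomorphism $pb\mapsto a$, blocked by the height obstruction $a\notin pG$. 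The underlying mechanism is the one isolated in Theorem \ref{finite-Q(G)} and Example \ref{example-nonQ-valuated}: the decomposition $\la a\ra\oplus\la a+pb\ra$ is not a valuated direct sum, since $h\bigl(a-(a+pb)\bigr)=h(pb)=1>0=\min\{h(a),h(a+pb)\}$. Your version buys complete verifiability and the stronger observation that even $\calS(G)$ is not closed under direct sums; the paper's version gestures at a whole family of examples in which one cyclic factor is not a summand. One small citation quibble: to see that $\la a+pb\ra$ is a summand you should invoke the fact recorded in Section \ref{sect-cyclic} that $U(x)=(0,1,\dots,n-1,\infty,\dots)$ holds if and only if $\la x\ra$ is a summand of exponent $n$ (or simply check $G=\la a+pb\ra\oplus\la b\ra$ directly); the corollary immediately preceding the statement is not the right reference, since its hypothesis is already $\la x\ra\in\calQ(G)$.
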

\begin{proof} Let $x,\ y\in G$ such that $U(x)$ has a single non--trivial gap before index $k$ and $h(p^kx)=k+\ell$ wih $\ell>1$. Let $\la y\ra$ be cyclic of exponent $k+1$. Then $\la x\ra$ and $\la y\ra\in\calQ(G)$ but $\la x\ra\oplus\la y\ra\not\in\calQ(G)$.
\end{proof}

\section{Finite subgroups in $\calQ(G)$}\label{finite}

We now extend the results of Section \ref{sect-cyclic} from cyclic
to finite subgroups.   The main result of this section is that a
finite subgroup $H$ of a group $G$ is in $\calQ(G)$ if and
only if it is a valuated direct sum of cyclic subgroups from
$\calQ(G)$.

Recall from \cite{HRW77} that if $H\subseteq G$,   the
\textit{ valuation of $H$ induced by heights in $G$} is defined by
$v(x)=h(x)$, the height of $x$ in $G$, for all $x\in H$ and
$H=K\oplus L$  is a \textit{valuated direct sum} if
$v(k+\ell)=\min\{v(k), v(\ell)\}$ for all $k\in K$ and $\ell\in
L$. In the following results, the valuation of $H$ is always that
induced by heights in $G$. Consequently, $f\in\Hom(H,G)$ does not
decrease valuations if and only if $f$ does not decrease Ulm
sequences in $G$.

 \begin{lemma} \label{valuated} Let $G$ be a
 group and let $K,\ L\leq \calQ(G) $   with $K\cap L=0$. If
$K\oplus L\in\calW(G)$ then $K\oplus L$ is a valuated direct sum.
\end{lemma}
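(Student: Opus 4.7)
The plan is to exploit the fact that group homomorphisms never decrease $p$-heights: if $x = p^n y$ in $G$ then $f(x) = p^n f(y)$, so $h(f(x)) \geq h(x)$ for every $f \in \End(G)$. Combined with the defining property of $\calW(G)$, this gives the inequality in the opposite direction to the trivial $h(k+\ell) \geq \min\{h(k), h(\ell)\}$.

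First I would use the hypothesis that $K \oplus L$ is an (internal) direct sum, i.e.\ $K \cap L = 0$, to make sense of the canonical projections $\pi_K, \pi_L$ as genuine endomorphisms of $K \oplus L$. Since $K \oplus L \in \calW(G)$, each of $\pi_K$ and $\pi_L$ extends to an endomorphism of $G$; call these extensions $\tilde{\pi}_K$ and $\tilde{\pi}_L$ respectively.

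Next, given any $k \in K$ and $\ell \in L$, I would apply the height-nondecreasing property of $\tilde{\pi}_K$ to $k + \ell$: since $\tilde{\pi}_K(k+\ell) = k$, we get $h(k) \geq h(k+\ell)$. Symmetrically $h(\ell) \geq h(k+\ell)$, so $\min\{h(k), h(\ell)\} \geq h(k+\ell)$. Together with the always-valid inequality $h(k+\ell) \geq \min\{h(k), h(\ell)\}$, this forces equality, which is exactly the valuated direct sum condition.

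I do not expect a serious obstacle here: the argument uses only the definition of $\calW(G)$ and the monotonicity of height under homomorphisms. In fact, the hypothesis $K, L \in \calQ(G)$ does not appear to be needed for this particular statement, and is presumably included because it is the setting in which the lemma will be applied later. The one thing to be careful about is phrasing the height-nondecreasing property cleanly, particularly at elements of infinite height (where the inequality $h(\tilde{\pi}_K(x)) \geq h(x)$ continues to hold under the usual ordinal conventions recalled before Lemma \ref{65.5}).
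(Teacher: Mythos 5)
Your proof is correct and is essentially the paper's argument: both rest on extending the canonical projections of $K\oplus L$ to endomorphisms of $G$ and invoking the fact that endomorphisms do not decrease heights (the paper phrases it as a contradiction, you run it directly to get the reverse inequality). Your observation that the hypothesis $K,\ L\in\calQ(G)$ is not actually used also matches the paper's proof.
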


\begin{proof}  Suppose the direct sum $K\oplus L$ is not valuated. Then there exists $(k,\ell)\in K\oplus L$ such that $h^G((k,\ell))>\min\{h^G(k),\,h^G(\ell)\}$. For example, say $h^G((k,\ell))>h^G(k)$. Let $f\in\End(K\oplus L)$ be the natural projection onto $K$. Then $h^G(f(k,\ell))=h^G(k)<h^G((k,\ell))$ so $f$ cannot be extended to $\End(G)$, a contradiction.
 \end{proof}

\begin{lemma}\label{finite-1}
Let $G$ be a  group, $K$  a {pure} subgroup of $G$ such
that $K$ is a  direct sum of cyclic groups
  and $G/K$ is divisible, and $H\leq K$   a finite
subgroup. If $f:\,H\to G$ is a homomorphism, the following are
equivalent:
\begin{enumerate}
\item $f$ can be extended to an endomorphism of $G$;

\item  $f$ can be extended to a homomorphism $\overline{f}:K\to
G$;

\item $f$ can be extended to a homomorphism $\overline{f}:K\to G$
such that $\overline{f}(K)$ is bounded.
\end{enumerate}
\end{lemma}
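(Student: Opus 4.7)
The plan is to prove the cycle (1)$\Rightarrow$(2)$\Rightarrow$(3)$\Rightarrow$(1). The implication (1)$\Rightarrow$(2) is immediate: restrict the endomorphism of $G$ extending $f$ to the subgroup $K$.

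For (2)$\Rightarrow$(3), the idea is to localize a given extension on a finite direct summand of $K$ containing $H$ and kill it on the complement. Write $K=\bigoplus_{i\in I}\langle e_i\rangle$. Since $H$ is finite, only finitely many coordinates occur in expressions of the (finitely many) generators of $H$, so I can pick a finite $I_0\subseteq I$ with $H\subseteq K_0:=\bigoplus_{i\in I_0}\langle e_i\rangle$, yielding a decomposition $K=K_0\oplus K_1$ with $K_0$ finite and $H\subseteq K_0$. If $\overline{f}:K\to G$ is the extension provided by (2), define
\[
\overline{f}'\colon K_0\oplus K_1\to G,\qquad \overline{f}'(k_0+k_1):=\overline{f}(k_0).
\]
Since $H\subseteq K_0$, this $\overline{f}'$ still extends $f$, and $\overline{f}'(K)=\overline{f}(K_0)$ is a homomorphic image of the finite group $K_0$, hence finite and in particular bounded.

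For (3)$\Rightarrow$(1), given $\overline{f}:K\to G$ with $p^n\overline{f}(K)=0$, I exploit both hypotheses on $K$ at once: because $G/K$ is divisible we have $G=K+p^nG$, and because $K$ is pure in $G$ we have $K\cap p^nG=p^nK$. Hence every $g\in G$ admits a decomposition $g=k+y$ with $k\in K$ and $y\in p^nG$, and any two such decompositions differ by an element of $p^nK\subseteq\Ker\overline{f}$. Thus the rule $\overline{g}(g):=\overline{f}(k)$ is a well-defined additive extension $\overline{g}:G\to G$ of $\overline{f}$, which is the desired endomorphism.

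The main delicate point is the well-definedness in (3)$\Rightarrow$(1), which rests on the triple equality $G=K+p^nG$, $K\cap p^nG=p^nK$, and $\overline{f}(p^nK)=0$; remove any one of these ingredients and the formula collapses. This also explains why (2)$\Rightarrow$(3) is an essential intermediate step even though (1) trivially implies (2): an arbitrary extension $\overline{f}:K\to G$ need not have bounded image (for instance, $\overline{f}$ could be the identity on $K$ when $K$ is unbounded), so one cannot extend directly from (2) to (1) without first passing through (3).
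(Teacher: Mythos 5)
Your proposal is correct and follows essentially the same route as the paper: restriction for (1)$\Rightarrow$(2), cutting down to a finite direct summand of $K$ containing $H$ for (2)$\Rightarrow$(3), and for (3)$\Rightarrow$(1) using $G=K+p^nG$ (from divisibility of $G/K$) together with purity to get well-definedness. The only difference is that you spell out explicitly the well-definedness via $K\cap p^nG=p^nK\subseteq\Ker\overline{f}$, which the paper leaves as ``not hard to see.''
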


\begin{proof}
(1)$\Rightarrow$(2) is obvious.

(2)$\Rightarrow$(3) We extend $f$ to a homomorphism $g:K\to G$.
Since $H$ is finite and $K$ is a   direct sum of cyclic groups,
there is a finite direct summand $L$ of $K$ such that $H\leq L$.
If $L\oplus M=K$, we define $\overline{f}:K\to G$ by
$\overline{f}(x+y)=g(x)$ for all $x\in L$ and $y\in M$.

(3)$\Rightarrow$(1) Let $\overline{f}$ be as in (3). We have to
extend $\overline{f}$ to an endomorphism of $G$. Let $k>0$ be an
integer such that $\overline{f}(K)$ is bounded by $p^k$. Since
$G/K$ is divisible, it is easy to see that $G=K+p^kG$, hence for
every $x\in G$ there are $y\in K$ and $z\in G$ such that
$x=y+p^kz$. {Since $K$ is pure in $G$}, it is not hard to
see that the map $g:G\to G$, $g(x)=\overline{f}(y)$ is well
defined, and it represents an endomorphism of $G$ which extends
$f$.
\end{proof}

\begin{corollary}\label{finite-1-Q-W}
 Let $G$ be a group and let $K$ be a pure subgroup of $G$ such that $K$ is a
direct sum of cyclic groups
with $G/K$ is divisible. Let   $H$ be  a finite
subgroup of $K$.
 \begin{enumerate}\item The following are equivalent:
\begin{itemize} \item[(a)] $H\in\calQ(G)$;
\item[(b)]  every homomorphism $f:H\to G$ can be extended to a
homomorphism $\overline{f}:K\to G$; \item[(c)] every homomorphism
$f:H\to G$ can be extended to a homomorphism $\overline{f}:K\to G$
such that $\overline{f}(K)$ is bounded.
\end{itemize}
\item The following are equivalent:
\begin{itemize} \item[(a)] $H\in\calW(G)$;
\item[(b)]  every endomorphism $f:H\to H$ can be extended to a
homomorphism $\overline{f}:K\to G$; \item[(c)] every endomorphism
$f:H\to H$ can be extended to a homomorphism $\overline{f}:K\to G$
such that $\overline{f}(K)$ is bounded.
\qed\end{itemize}
\end{enumerate}
\end{corollary}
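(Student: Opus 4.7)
The plan is to deduce both parts of the corollary directly from Lemma \ref{finite-1} by applying it pointwise to each homomorphism appearing in the various definitions. The first step is to unpack the definitions: $H\in\calQ(G)$ says that every $f\in\Hom(H,G)$ extends to an endomorphism of $G$, while $H\in\calW(G)$ says the same for every $f\in\End(H)$, viewed as a map $H\to G$ through the inclusions $H\hookrightarrow K\hookrightarrow G$. Conditions (b) and (c) in each part correspond, modulo a universal quantifier over the relevant $f$, to conditions (2) and (3) of Lemma \ref{finite-1} applied to a single $f$.

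For part (1), I would fix an arbitrary $f\in\Hom(H,G)$ and observe that the hypotheses on $G$, $K$, and $H$ demanded by Lemma \ref{finite-1} ($K$ pure in $G$, $K$ a direct sum of cyclics, $G/K$ divisible, $H\leq K$ finite) are exactly those assumed in the corollary. The lemma then yields the equivalence, for this particular $f$, of the three statements: $f$ extends to $\End(G)$; $f$ extends to some $\overline{f}\in\Hom(K,G)$; and such an $\overline{f}$ can be chosen with $\overline{f}(K)$ bounded. Quantifying universally over $f\in\Hom(H,G)$ then delivers the claimed equivalence of (a), (b), and (c).

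For part (2), the argument is formally identical, with the sole change being that the universal quantifier now ranges over $\End(H)$ rather than $\Hom(H,G)$. Each $f\in\End(H)$, viewed through the inclusion $H\hookrightarrow G$ as a homomorphism $H\to G$, still meets the hypotheses of Lemma \ref{finite-1}, and note that the extensions $\overline{f}:K\to G$ required in (b) and (c) take values in $G$ rather than in $H$, matching the lemma's conclusion exactly. Quantifying over $\End(H)$ yields (a)$\Leftrightarrow$(b)$\Leftrightarrow$(c) of part (2).

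There is essentially no obstacle here: all the nontrivial content, in particular the use of purity of $K$ and divisibility of $G/K$ to convert a bounded extension $K\to G$ into an endomorphism of $G$, has already been carried out in Lemma \ref{finite-1}, and the corollary is simply its universally quantified reformulation for the submodule classes $\calQ(G)$ and $\calW(G)$.
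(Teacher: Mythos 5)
Your proposal is correct and coincides with the paper's own treatment: the corollary is stated there with only a \qed, being regarded precisely as the universally quantified restatement of Lemma \ref{finite-1} over $\Hom(H,G)$ (for $\calQ(G)$) and over $\End(H)$ (for $\calW(G)$) that you describe. Nothing further is needed.
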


We are now able to characterize finite  subgroups of $G$ in
$\calQ(G)$, respectively in $\calW(G)$.

\begin{theorem}\label{finite-Q(G)}
Let $G$ be a group and let $H=\bigoplus_{i=1}^n H_i$ be a
finite subgroup such that all $H_i$ are cyclic groups. The
following are equivalent:

\begin{enumerate}
\item $H\in\calQ(G)$;

\item \begin{enumerate} \item $H_i\in\calQ(G)$ for all
$i=1,\dots,n$,

\item $H=\bigoplus_{i=1}^n H_i$ is a   valuated direct sum of
cyclic groups.
\end{enumerate}
\end{enumerate}
\end{theorem}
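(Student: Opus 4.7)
The forward direction (1)$\Rightarrow$(2) is the easier half. For (2)(a), $\calQ(G)$ is closed under passing to direct summands: any $f:H_i\to G$ extends, by composition with the projection $H\to H_i$, to a map $H\to G$, which then extends to $G$ by hypothesis. For (2)(b), observe that $\calQ(G)\subseteq\calW(G)$, since any endomorphism of $H$ followed by the inclusion $H\hookrightarrow G$ is a homomorphism $H\to G$. Each partial sum $\bigoplus_{j\in S}H_j$ is also a summand of $H$, hence in $\calQ(G)\subseteq\calW(G)$. Applying Lemma \ref{valuated} to each decomposition $H=H_i\oplus\bigoplus_{j\neq i}H_j$ and inducting on $n$ yields the full valuated direct sum identity $h(\sum_i a_i)=\min_i h(a_i)$.

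The work is in (2)$\Rightarrow$(1). Given $f:H\to G$, we must extend $f$ to an endomorphism of $G$. First, reduce to the case $H\cap G^1=0$. If $G$ has an unbounded basic subgroup, Lemma \ref{cyclic2}(1) gives $H_i\cap G^1=0$ for every $i$, and the valuated sum condition propagates this to $H\cap G^1=0$. Otherwise $G=B\oplus D$ with $B$ bounded and $D$ divisible: split $f$ as $f_B+f_D$ via the projections; injectivity of $D$ handles $f_D$ outright, and after absorbing the divisible components of each $x_i$ into $D$ (as in the proof of Lemma \ref{sufficient}), the remaining problem is to extend $f_B$ on the $B$-part of $H$, which lives in the bounded group $B$ where the first Ulm subgroup is trivial.

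With $H\cap G^1=0$, the elements of $H$ have finite height, so $H$ can be embedded in a basic subgroup $K=\bigoplus_\alpha\la c_\alpha\ra$ of $G$. Being finite, $H$ lies in some finite direct summand $K_0$ of $K$. By Corollary \ref{finite-1-Q-W}, extending $f$ to an endomorphism of $G$ is equivalent to extending $f$ to a homomorphism $K\to G$; extending by zero outside $K_0$, it suffices to extend $f|_H$ to $K_0\to G$.

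The main obstacle is this last step. Extending each $f|_{H_i}$ separately via $H_i\in\calQ(G)$ yields endomorphisms $g_i\in\End(G)$, but these cannot simply be summed since $g_i(H_j)\neq 0$ in general. The valuated direct sum condition is the key structural ingredient that allows the patching. By Theorem \ref{3.14}, each $\la x_i\ra$ sits inside $K$ as a combination of at most two cyclic summands of $G$ with tightly controlled Ulm data. The valuated condition forces the relevant summands for distinct $i$ to be aligned in a compatible way, producing a common refinement of the decomposition of $K_0$ into cyclic summands for which the $H_i$ are concentrated in disjoint blocks. One then defines the extension of $f$ to $K_0$ block by block, using the $\calQ(G)$-structure of each $H_i$ on the corresponding block. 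Verifying that the resulting map actually extends $f$ on all of $H$ (not merely on each $H_i$), and that it does not raise heights beyond what the valuated direct sum permits, will be the most delicate part of the argument.
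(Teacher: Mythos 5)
Your forward direction is fine and matches the paper. But the backward direction has a genuine gap at exactly the point you flag as ``the most delicate part of the argument'': you never actually extend $f$ from $H$ to the finite summand $K_0$, and the mechanism you sketch for doing so (realizing each $H_i$ via Theorem \ref{3.14} inside at most two cyclic summands, then producing a ``common refinement'' of $K_0$ into disjoint blocks carrying the separate $H_i$) is not carried out and is not how the difficulty is resolved. It is not clear that such a disjoint-block refinement exists in general, and even if it did, you would still have to check that the block-by-block map agrees with $f$ on cross terms --- which is the whole problem restated.

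The missing idea is much more direct. Since each $H_i\in\calQ(G)$, the restriction $f|_{H_i}$ extends to an endomorphism of $G$, so $h(x_i)\le h(f(x_i))$ for every $x_i\in H_i$. The valuated direct sum hypothesis then gives, for $x=x_1+\dots+x_n$,
\[
h(x)=\min_i h(x_i)\le \min_i h(f(x_i))\le h\bigl(f(x_1)+\dots+f(x_n)\bigr)=h(f(x)),
\]
i.e.\ $f$ does not decrease heights on all of $H$ at once. Now $H$ is a nice subgroup of a basic subgroup $B$ containing it, with $B/H$ a direct sum of cyclics, so \cite[Corollary 81.4]{fuc2} extends $f$ to $B\to G$, and Lemma \ref{finite-1} finishes. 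No alignment of cyclic summands is needed; the two hypotheses (a) and (b) combine precisely to verify the height condition in Fuchs's extension criterion. Separately, your reduction in the case $G=B\oplus D$ ($B$ bounded, $D$ divisible) is too loose: for the indices $i$ with $H_i\cap G^1\ne 0$ one must check that the components $b_i$ generate a direct summand of $B$ and that the $d_i$ generate independent quasi-cyclic subgroups of $D$ (both of which use the valuated hypothesis), before the injectivity of $D$ can be invoked; ``absorbing the divisible components'' does not by itself produce the decomposition of $G$ along which the extensions are assembled.
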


\begin{proof}
(1)$\Rightarrow$(2) This follows from Lemma \ref{valuated}.

(2)$\Rightarrow$(1) \textbf{Case I: $G$ has an unbounded basic
subgroup.} Then $H_i\cap G^1=0$ for every $i\in \{1,\dots,n\}$.
Since the direct sum $\bigoplus_{i=1}^n H_i$ is valuated, it
follows that $H\cap G^1=0$, hence there is a basic subgroup $B\leq
G$ such that $H\leq B$. By Lemma \ref{finite-1}, it is enough to
prove that every homomorphism $f:H\to G$ can be extended to a
homomorphism $f':B\to G$.

We consider a homomorphism $f:H\to G$. If $x\in B$ we   denote by
$h^B(x)$ the height of $x$ calculated in $B$ and by $h(x)$ the
height of $x$ as an element of $G$.

Then the restrictions $f|_{H_i}$ can be extended to endomorphisms
of $G$, and it follows that $h(x_i)\leq h(f(x_i))$ for all $i$ and
all $x_i\in H_i$.

Let $x=x_1+\dots+x_n\in H$ with $x_i\in H_i$ for all $i$. We
observe that \begin{align*} h^B(x)\leq h(x)&=\min\{h(x_i)\mid
i=1,\dots,n\}\leq \min\{h(f(x_i))\mid i=1,\dots,n\}\\ &\leq
h(f(x_1)+\dots+f(x_n))=h(f(x)).\end{align*} Since $B/H$ is a
direct sum of cyclic groups, and $H$ is a nice subgroup of $B$ as
a consequence of \cite[79(b)]{fuc2}, we can apply \cite[Corollary
81.4]{fuc2} to conclude that there is a homomorphism $f':B\to G$
which extends $f$, and the proof is complete.

\textbf{Case II: $G=B\oplus D$ with $B$ bounded and $D$
divisible.} Let $X=\{i\in\{1,\dots,n\}\mid H_i\cap G^1=0\}$ and
$Y=\{1,\dots,n\}\setminus X$. Since the direct sum
$\bigoplus_{i=1}^n H_i$ is valuated, it follows that
$\bigoplus_{i\in X} H_i\cap G^1=0$, hence we can suppose
$\bigoplus_{i\in X} H_i\leq B$.

For every $i\in Y$ we fix a generator $h_i$ for $H_i$, and
write $h_i=b_i+d_i$ with $\langle b_i\rangle$ a direct summand of
$B$, $d_i\in D$. Since $H_i\cap G^1\neq 0$, it follows that
$\exp(B)=\exp(b_i)<\exp(d_i)=\exp(h_i)$ by Lemma \ref{cyclic2}.

We claim that $\sum_{i\in Y}\langle b_i\rangle=\bigoplus_{i\in
Y}\langle b_i\rangle$. In order to prove this, suppose by
contradiction that there exist an index $j\in Y$ and a non-zero
element $0\neq k_j b_j=\sum_{i\in Y\setminus \{j\}}k_i b_i$. Then
$k_j h_j- \sum_{i\in Y\setminus \{j\}}k_i h_i$ is of infinite
height, hence the sum $\bigoplus_{i\in Y} H_i$ is not valuated
since $k_jh_j$ is of finite height. This contradicts our
hypothesis, so the claim is true. Moreover, $\bigoplus_{i\in
Y}\langle b_i\rangle$ is a direct summand of $B$ as a bounded pure
subgroup, and using \cite[Exercise 9.8]{fuc1} we conclude that it
is an absolute direct summand of $B$.

Using a similar argument, if we suppose that $(\bigoplus_{i\in
Y}\langle b_i\rangle)\cap (\bigoplus_{i\in X}H_i)\neq 0$ we obtain
that $\bigoplus_{i=1}^{n}H_i$ is not a valuated direct sum, a
contradiction. Hence $$\textstyle{(\bigoplus_{i\in Y}\langle
b_i\rangle)\cap (\bigoplus_{i\in X}H_i) =0,}$$ and it follows that
there is a direct complement $C$ of $(\bigoplus_{i\in Y}\langle
b_i\rangle)$ in $B$ such that $\bigoplus_{i\in X}H_i\leq C$.

Moreover, since the sum $\bigoplus_{i\in Y}H_i$ is direct and
$H_i[p]=\langle d_i\rangle[p]$ for all $i\in Y$, it follows that
the sum $\sum_{i\in Y}\langle d_i\rangle$ is a direct sum. Hence
we can find infinite quasi-cyclic subgroups $D_i$, $i\in Y$, such
that $D=(\bigoplus_{i\in Y}D_i)\oplus D'$ and $d_i\in D_i$ for all
$i\in Y$.

Let $f:H\to G$ be a homomorphism. Using the same argument as in
the first case we observe that every homomorphism $\bigoplus_{i\in
X} H_i\to G$ can be extended to a homomorphism $f_0:C\to G$ (note
that the valuation induced on $\bigoplus_{i\in X} H_i$ by $C$ is
the same as the valuation induced by $G$).

For all $i\in I$, we have $\exp(f(h_i))\leq\exp(h_i)$, hence
$f(h_i)=a_i+z_i$ with $a_i\in B$ and $z_i\in D$ such that
$\exp(z_i)\leq\exp(d_i)$. Therefore there exist homomorphisms
$f'_i:\langle b_i\rangle \to G$ such that $f'_i(b_i)=a_i$ and
$f''_i:D_i\to D$ such that $f''_i(d_i)=z_i$. %Therefore $(f_1+
%f_2)(x)=y$.

Since $$\textstyle{G=C\oplus (\bigoplus_{i\in Y}\langle
b_i\rangle)\oplus (\bigoplus_{i\in Y}D_i)\oplus D',}$$ the
homomorphisms $f_0$, $f'_i$ and $f''_i$, $i\in Y$, induce an
endomorphism $\overline{f}:G\to G$, and it is easy to see that
$\overline{f}$ extends $f$.
\end{proof}

\begin{remark}
Cyclic valuated groups are characterized using invariants in
\cite[Theorem 3]{HRW77}. Therefore this result together with
Theorem \ref{finite-Q(G)} and Theorem \ref{3.14} give us a
characterization by invariants for subgroups in $\calQ(G)$.
\end{remark}

We close this section with a characterization of
some finite subgroups in $\calW(G)$.

\begin{theorem}\label{finite-W(G)}
Let $G$ be a group and $H=\bigoplus_{i=1}^n H_i$ a finite subgroup
such that   $H\cap p^\omega G=0$ and each $H_i=\langle z_i\rangle$
is a cyclic group of exponent $e_i$. The following are equivalent:

\begin{enumerate}
\item $H\in\calW(G)$;

\item \begin{enumerate} \item If $e_j\leq e_i$ then $U(z_i)\leq
U(z_j)\leq U(p^{e_i-e_j}z_i)$,

\item $H=\bigoplus_{i=1}^n H_i$ is a   valuated direct sum of
cyclic groups.
\end{enumerate}
\end{enumerate}
\end{theorem}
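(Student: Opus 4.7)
The plan is to treat each implication separately.

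\textit{Direction (1)$\Rightarrow$(2).} Condition (b) follows from the projection argument of Lemma~\ref{valuated}: if $H=\bigoplus_i H_i$ were not a valuated direct sum, some natural projection would be an endomorphism of $H$ strictly decreasing the height of some element, precluding extension to $\End(G)$. For (a), fix indices $i,j$ with $e_j\leq e_i$ and consider two endomorphisms of $H$: $\varphi$ defined by $\varphi(z_i)=z_j$ and $\varphi(z_k)=0$ for $k\neq i$, and $\psi$ defined by $\psi(z_j)=p^{e_i-e_j}z_i$ and $\psi(z_k)=0$ for $k\neq j$. Both are well-defined since the targets have exponent at most~$e_j$. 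Extending $\varphi$ to $\End(G)$ forces $h^G(p^kz_i)\leq h^G(p^kz_j)$ for all $k$, i.e., $U(z_i)\leq U(z_j)$; extending $\psi$ gives $U(z_j)\leq U(p^{e_i-e_j}z_i)$.

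\textit{Direction (2)$\Rightarrow$(1).} First fix a basic subgroup $B$ of $G$ with $H\leq B$. Such a $B$ exists because $H$ is finite with $H\cap p^\omega G=0$: if $G$ has an unbounded basic subgroup, this proceeds as in the proof of Theorem~\ref{finite-Q(G)} Case~I; otherwise $G=B_0\oplus D$ with $B_0$ bounded and $D$ divisible, and the injectivity of $D$ allows an adjustment of the decomposition so that $H$ lies in a basic subgroup $B\cong B_0$. By Corollary~\ref{finite-1-Q-W}(2), it now suffices to extend every endomorphism $f:H\to H$ to a homomorphism $\overline f:B\to G$.

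Given such $f$, decompose $f(z_i)=\sum_j y_{ij}$ with $y_{ij}\in H_j$, and write $y_{ij}=p^{t_{ij}}u_{ij}z_j$ with $u_{ij}$ coprime to $p$ (or $y_{ij}=0$). The constraint $\exp(y_{ij})\leq e_i$ forces $t_{ij}\geq\max(0,e_j-e_i)$. The claim is that $f$ is height-non-decreasing. Using (b) to split heights across direct summands and the fact that multiplication by a unit preserves heights, this reduces to showing, for each $i,j$ and each $s\geq 0$, that $h^G(p^{s+t_{ij}}z_j)\geq h^G(p^s z_i)$. If $e_j\leq e_i$, condition (a) gives $h^G(p^s z_i)\leq h^G(p^s z_j)\leq h^G(p^{s+t_{ij}}z_j)$. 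If $e_j>e_i$, condition (a) applied with $i,j$ interchanged yields $U(z_i)\leq U(p^{e_j-e_i}z_j)$, so $h^G(p^s z_i)\leq h^G(p^{s+e_j-e_i}z_j)\leq h^G(p^{s+t_{ij}}z_j)$ since $t_{ij}\geq e_j-e_i$. With $f$ valuation-preserving, $H$ nice in $B$ by \cite[79(b)]{fuc2} (being finite), and $B/H$ a direct sum of cyclic groups, \cite[Corollary~81.4]{fuc2} produces the required extension.

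\textit{Main obstacle.} The crux is the case analysis showing that condition (a), stated only for pairs of generators, suffices to control all mixed components $y_{ij}$ appearing in $f(z_i)$. The key observation is the matching between the exponent bound $t_{ij}\geq e_j-e_i$ (forced in the case $e_j>e_i$) and the shift by $p^{e_j-e_i}$ in the Ulm-sequence comparison $U(z_i)\leq U(p^{e_j-e_i}z_j)$.
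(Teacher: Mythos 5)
Your proof is correct and follows essentially the same route as the paper's: the forward direction uses the same generator-swapping endomorphisms $z_i\mapsto z_j$ and $z_j\mapsto p^{e_i-e_j}z_i$ together with the projection argument of Lemma~\ref{valuated}, and the converse reduces, exactly as in the paper, to checking that the restriction of $f$ to each cyclic summand does not decrease Ulm sequences, which you verify by the same exponent case split that the paper packages into a single $\min$ formula over the components of $f(z_j)$. Your explicit construction of a basic subgroup containing $H$ merely spells out what the paper defers to the phrase ``as in the proof of Theorem~\ref{finite-Q(G)},'' so it is not a genuinely different method.
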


\begin{proof}
(1)$\Rightarrow$(2) In order to prove (a), let $i,j$ be two
  indices such that $e_j\leq e_i$. Then there are homomorphisms
$f:H_i\to H_j$ with $f(z_i)=z_j$ and $g:H_j\to H_i$ with
$f(z_j)=p^{e_i-e_j}z_i$. Since these homomorphisms can be extended
to endomorphisms of $H$, they can be extended to endomorphisms of
$G$. The inequalities $U(z_i)\leq U(z_j)\leq U(p^{e_i-e_j}z_i)$
follow from the fact that   endomorphisms do  not decrease
heights.

The statement (b) is   a consequence of Lemma \ref{valuated}.

(2)$\Rightarrow$(1) As in the proof of Theorem \ref{finite-Q(G)},
it is enough to prove that every homomorphism of $f:H_i\to H$ does
not decrease the valuation.

Let $f:H_j\to H$ be a homomorphism defined for some
$j\in\{1,\dots,n\}$. Since $U(mz)=U(z)$ for all integers $m$ with
$(m,p)=1$, it is enough to prove that $U(p^kz_j)\leq
U(p^k(f(z_j)))$ for all $0\leq k<e_j$. Since for every element $x$
and for every positive integer $k$ the indicator $U(p^kx)$ can be
obtained by deleting the first $k$ components of $U(x)$, it is
enough to prove $U(z_j)\leq U(f(z_j))$.

Let $f(z_j)=\sum_{i=1}^{n}m_iz_i$. Note that if $e_j< e_i$ then
$p^{e_i-e_j}$ divides $m_i$. Then
$$\textstyle{ f(z_j)=(\sum_{e_j<e_i}n_ip^{e_i-e_j}z_i)+(\sum_{e_i\leq
e_j}m_iz_i),}$$ hence $$U(f(z_j))=\min \{U(n_ip^{e_i-e_j}z_i)\mid
e_j<e_i\}\cup\{U(m_i z_i)\mid e_i\leq e_j\}\geq U(z_j),$$ and the
proof is complete.
\end{proof}

In the following example we show that both pairs of conditions (a)
and (b) in Theorem \ref{finite-Q(G)} and Theorem \ref{finite-W(G)}
respectively are necessary.

\begin{example}\label{example-nonQ-valuated}
Let $G=\langle a\rangle\oplus \langle b\rangle\oplus \langle
c\rangle$ be a group with $\exp(a)=1$ and $\exp(b)=\exp(c)=2$.
Then $x=(a,pb,0)$ and $y=(a,0,pc)$ generate direct summands. We
have $\langle x,y\rangle=\langle x\rangle\oplus \langle
y\rangle=\langle x\rangle\oplus \langle (0,-pb,pc)\rangle$. The
direct sum $\langle x\rangle\oplus \langle y\rangle$ is not a valuated
direct sum, while $\langle x\rangle\oplus
\langle (0,-pb,pc)\rangle$ is a valuated direct sum, but
$\langle (0,-pb,pc)\rangle\notin \calQ(G)$.
\end{example}

 \begin{remark}\label{counterexample}  The result of Theorem \ref{finite-W(G)} cannot be extended to infinite direct sums of cyclic groups.
Pierce \cite[Theorem 15.4]{P63} has constructed an example of a
separable $p$--group $G$ with standard basic subgroup (i.e.
$B=\bigoplus_{i=1}^\infty \bbZ(p^i))$ such that $\End(G)=J +E $
where $J$ is the rank 1 torsion--free complete $p$--adic module
generated by the identity and $E$ is the ideal of small
endomorphisms. On the other hand, $\End(B)$ has infinite
torsion--free $p$--adic rank, so $B\not\in\calW(G)$.
\end{remark}

\section{Finitely generated subgroups in $\calP(G)$}

Jain and Singh proved in \cite{JS67} that  if $R$ is a principal
ideal domain, then all pseudo--injective modules are
quasi--injective. In this section we prove a stronger version of
this for $R=\bbZ$:  if $G$ is a group then all   finitely
generated subgroups   in $\calP(G)$   are in $\calQ(G)$.

\textsl{In this section, $G$ is an arbitrary abelian group.}

  In order to prove $\calQ_f(G)=\calP_f(G)$ for
  all groups $G$ we start with the
  case of $p$-groups.

\begin{lemma}
Let $G$ be a $p$-group and $H\in\calP(G)$. If $K$ is a cyclic
direct summand of $H$ then $K\in\calP(G)$.
\end{lemma}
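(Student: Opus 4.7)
My plan is to reduce the extension problem for $K$ to the hypothesis $H \in \calP(G)$: given a monomorphism $f\colon K \to G$, I will build a monomorphism $\tilde f\colon H \to G$ extending $f$, and then $H \in \calP(G)$ will supply an endomorphism of $G$ extending $\tilde f$, in particular extending $f$.

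Write $H = K \oplus L$ with $K = \langle x \rangle$ and set $y = f(x)$. I look for $\tilde f$ of the form $\tilde f(k+\ell) = f(k) + \beta(\ell)$ for a homomorphism $\beta\colon L \to G$. Any such $\tilde f$ extends $f$, and is a monomorphism iff $\beta$ is injective and $\beta(L) \cap \langle y\rangle = 0$. The naive choice $\beta = \iota_L$ (the inclusion $L \hookrightarrow G$) already yields a monomorphism when $\langle y\rangle \cap L = 0$. Otherwise I plan to perturb by a homomorphism $\sigma\colon L \to K \subseteq G$, setting $\beta(\ell) = \ell + \sigma(\ell)$. Such a $\beta$ is automatically injective because $L \cap K = 0$ in $G$, and the monomorphism condition for $\tilde f$ reduces to requiring $\ell + \sigma(\ell) \notin \langle y\rangle$ for every nonzero $\ell \in L$.

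The main obstacle is producing a suitable $\sigma$. For a single $\ell$, the forbidden values of $\sigma(\ell)$ form a single coset of $K \cap \langle y\rangle$ in $K$; since $\sigma$ is determined by its values on a generating set of $L$, the task is to prescribe those values so that all coset-avoidance conditions hold simultaneously. In the finitely generated setting I expect this to follow from a combinatorial argument exploiting the cyclicity of $K$ (which makes $\Hom(L, K)$ sufficiently rich) together with the order constraints $p^{e_i}\sigma(\ell_i) = 0$ coming from the exponents $e_i$ of generators of $L$. Once $\tilde f$ is constructed, applying $H \in \calP(G)$ completes the proof.
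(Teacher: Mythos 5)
Your overall strategy---extend the given monomorphism $f\colon K\to G$ to a monomorphism $\tilde f\colon H\to G$ and then invoke $H\in\calP(G)$---cannot work, because such a $\tilde f$ need not exist. Take $G=H=\langle a\rangle\oplus\langle b\rangle$ with $\exp(a)=1$, $\exp(b)=2$, $K=\langle a\rangle$, $L=\langle b\rangle$, and $f(a)=pb$. Here $f$ is a monomorphism, $H\in\calP(G)$ trivially, and $K$ is a cyclic direct summand of $H$; but any homomorphism $g\colon H\to G$ with $g(a)=pb$ sends the two-dimensional socle $G[p]=\langle a\rangle\oplus\langle pb\rangle$ into the one-dimensional subgroup $\langle pb\rangle$ (since $g(pb)=pg(b)\in pG\cap G[p]=\langle pb\rangle$), so $g$ has nontrivial kernel. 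Thus no extension of $f$ to $H$ is injective. Your specific ansatz also fails concretely in this example: every $\sigma\colon L\to K$ satisfies $\sigma(pb)=p\sigma(b)=0$, so $\beta(pb)=pb=f(a)$ and $a-pb\in\ker\tilde f$. The unproved ``combinatorial argument'' for choosing $\sigma$ is therefore not a finishable gap but an impossibility.

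The paper's proof sidesteps exactly this obstruction. When $f(K)\cap L\neq 0$ it does not attempt to extend $f$ itself; it replaces $f$ by $f'=f-1_K$, shows (using that the subgroups of the cyclic group $f(K)$ form a chain, whence $f(K)[p]\subseteq L$) that $f'$ is a monomorphism with $f'(K)\cap L=0$, extends the monomorphism $f'\oplus\iota_L$ of $H$ to an endomorphism $\psi$ of $G$ using $H\in\calP(G)$, and only then recovers an extension of $f$ as $\psi+1_G$. The idea you are missing is that the identity of $G$ is always available as an endomorphism, so one may extend a conveniently perturbed monomorphism and correct afterwards, rather than insisting on extending $f$ through $H$ directly.
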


\begin{proof}
Let $L$ be a direct complement of $K$ in $H$, so $H=K\oplus L$,
and let $\varphi:K\to G$ be a monomorphism.

If $\varphi(K)\cap L=0$, then the homomorphism $\psi:K\oplus L\to
G$, $\psi(x,y)=\varphi(x)+y$ is a monomorphism, hence it can be
extended to an endomorphism $\overline{\varphi}\in\End(G)$. It is
easy to see that $\overline{\varphi}$ also extends $\varphi$.

If $\varphi(K)\cap L\neq 0$, we first observe that the socle
$\varphi(K)[p]$ is contained in $L$ since $\varphi(K)$ is a cyclic
$p$-group (hence   its subgroup lattice is a finite chain).
Let $\varphi':K\to G$ be the homomorphism defined by
$\varphi'(x)=\varphi(x)-x$. Suppose that $\varphi'$ is not a
monomorphism. Then there exists a non-zero element $x\in K$ such
that $\varphi'(x)=0$. Then $\varphi(x)=x\in K$, hence
$\varphi(K)\cap K\neq 0$. It follows that $\varphi(K)[p]\subseteq
K$, and this contradicts $K\cap L=0$. Hence $\varphi'$ is a
monomorphism. Suppose that $\varphi'(K)\cap L\neq 0$. Then there
exists $x\in K$ such that $0\neq \varphi(x)-x\in L$. If $e$ is the
exponent of $x$ then $p^{e-1}(\varphi(x)-x)\in L[p]$. But
$p^{e-1}\varphi(x)\in\varphi(K)[p]\subseteq L[p]$, hence
$p^{e-1}x\in L[p]$, a contradiction. Then $\varphi'(K)\cap L=0$
and we can apply what we proved so far to observe that there
exists an endomorphism $\psi$ of $G$ which extends $\varphi$. Then
for every $x\in K$ we have $\varphi(x)=\psi(x)+x$, hence
$\psi+1_G$ extends $\phi$.
\end{proof}

We need the following technical result:

\begin{lemma}\label{lemma-dsvc}
Let $G$ be a $p$-group and $H=\bigoplus_{i=1}^n H_i$ a finite
subgroup such that all $H_i=\langle h_i\rangle$ are cyclic groups
such that
\begin{enumerate}
\item[(i)] $\exp(h_1)\leq \exp(h_2)\leq\dots\leq\exp(h_n)$,

\item[(ii)] for all $m\in\{1,\dots,n\}$ and for all $x\in
\bigoplus_{i=1}^m H_i$ we have $U(h_m)\leq U(x)$, and

\item[(iii)] if $\exp(h_i)=\exp(h_j)$ then $U(h_i)=U(h_j)$.
\end{enumerate}
Then the direct sum $\bigoplus_{i=1}^n H_i$ is a valuated direct
sum of cyclic $p$-groups.
\end{lemma}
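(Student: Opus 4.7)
The plan is to proceed by induction on $n$. The base case $n=1$ is trivial, as a single cyclic subgroup is vacuously a valuated direct sum of itself.

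For the inductive step, set $K=\bigoplus_{i=1}^{n-1}H_i$. The restrictions of (i), (ii), (iii) to $h_1,\dots,h_{n-1}$ are precisely the same hypotheses applied to $K$, so by the inductive hypothesis $K$ is a valuated direct sum. It remains to show that $K\oplus H_n$ is valuated, i.e.\ that for every $x\in K$ and every nonzero $y\in H_n$ we have $h(x+y)=\min\{h(x),h(y)\}$. Since $h(x+y)\geq\min\{h(x),h(y)\}$ is automatic, the task reduces to ruling out the ``cancellation'' case where $h(x)=h(y)=a$ but $h(x+y)>a$.

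Suppose for contradiction that such cancellation occurs. Writing $y=p^{j}uh_n$ with $\gcd(u,p)=1$ and $0\leq j<\exp(h_n)$ gives $h(y)=U(h_n)_j=a$. Since $K$ is valuated, the decomposition $x=\sum_{i<n}x_i$ with $x_i\in H_i$ satisfies $h(x)=\min_{i<n}h(x_i)=a$, and hence some index $i_0<n$ realises this minimum; write $x_{i_0}=p^{k}vh_{i_0}$ with $\gcd(v,p)=1$, so $a=U(h_{i_0})_k$. I then plan to combine three pieces of data to reach a contradiction: condition (ii) applied to $x+y\in\bigoplus_{i=1}^n H_i$ yields $U(h_n)\leq U(x+y)$, and analogously $U(h_n)\leq U(y)$ and $U(h_{i_0})\leq U(x)$; condition (iii) supplies the rigidity $U(h_{i_0})=U(h_n)$ whenever $\exp(h_{i_0})=\exp(h_n)$; and the strict monotonicity of Ulm sequences controls how the entries $U(h_n)_j$ and $U(h_{i_0})_k$ sit inside the full sequences. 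From the cancelling pair $(x,y)$ one should then construct an explicit element $z$ of some $\bigoplus_{i=1}^m H_i$ whose Ulm sequence violates $U(h_m)\leq U(z)$ in some coordinate, contradicting (ii).

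The hardest step will be closing this final contradiction. The relation $\overline{x}+\overline{y}=0$ in $p^aG/p^{a+1}G$ is clean, but turning it into an explicit violator of (ii) demands delicate Ulm-sequence bookkeeping. I expect the subcase $j=0$ to require the most care, since then $y$ is a unit multiple of $h_n$ sitting at the bottom of its Ulm filtration and cannot be obtained by shifting; here the key leverage must come from (iii), which forces $\exp(h_{i_0})=\exp(h_n)$ and hence $U(h_{i_0})=U(h_n)$. In that subcase $x_{i_0}$ and $y$ become ``the same sort of element'', so the assumed cancellation should directly contradict the fact that $\bigoplus_{i=1}^n H_i$ is an internal direct sum.
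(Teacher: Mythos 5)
Your proposal sets up the problem but does not prove the lemma: everything of substance is deferred to the step you describe only as a plan (``I then plan to combine three pieces of data to reach a contradiction'', ``one should then construct an explicit element $z$ \dots''). The reduction itself --- induction on $n$, showing $K\oplus H_n$ is valuated, and excluding the configuration $h(x)=h(y)=a$ with $h(x+y)>a$ for $x\in K$ and $0\neq y\in H_n$ --- is correct but routine; the deferred step is the whole lemma. For comparison, the paper does not attempt any such direct height computation: it works with the Hunter--Richman--Walker invariants $H_U=(H(U)+pH)/(H(U)^*+pH)$, shows that $\{h_1,\dots,h_n\}$ is a $v$-basis, and invokes \cite[Theorem 3]{HRW77}.

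More seriously, the contradiction you intend to extract from (i)--(iii) alone is not available, and your expectation for the subcase you flag as hardest ($j=0$, equal exponents) is mistaken. Take $G=\langle a\rangle\oplus\langle b\rangle$ with $\exp(a)=2$ and $\exp(b)=3$, and put $h_1=a$, $h_2=a+pb$. Then $\langle h_1\rangle\cap\langle h_2\rangle=0$, $U(h_1)=U(h_2)=(0,1,\infty,\dots)$, and every element $x$ of $\langle h_1\rangle\oplus\langle h_2\rangle=\langle a\rangle\oplus\langle pb\rangle$ has exponent at most $2$, hence $U(x)\geq(0,1,\infty,\dots)$; thus (i), (ii) and (iii) all hold for the decomposition $\langle h_1\rangle\oplus\langle h_2\rangle$. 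Nevertheless $h(h_1-h_2)=h(-pb)=1>0=\min\{h(h_1),h(-h_2)\}$, so this decomposition is not valuated. In particular, the ``cancellation'' $h(x+y)>\min\{h(x),h(y)\}$ does not conflict with the sum being direct ($h_1-h_2=-pb\neq 0$; the cancellation takes place modulo $p^{a+1}G$, not in $H$), so the contradiction you hope for cannot be produced from (i)--(iii) by any amount of Ulm-sequence bookkeeping. What is genuinely needed --- and what the intended application supplies, since $H\in\calP(G)$ yields the equality $U(h_m+h_j)=U(h_m)$ and not merely the inequality $U(h_m)\leq U(h_j)$ --- is that generators sharing a common Ulm sequence $U$ remain independent in $H_U$; any repair of your argument must import that extra information rather than derive it from (ii) and (iii) as stated.
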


\begin{proof}
The proof is by induction. For $n=1$ the property is obvious.
Suppose that (ii) is valid for all $m<n$. Then
$\bigoplus_{i=1}^{n-1} H_i$ is a valuated direct sum of cyclic
groups. Let $k$ be the minimal index such that
$\exp(H_n)=\exp(H_k)$.

We observe that the sequence $U(h_i),\ i=1,\dots,n$ is a
decreasing sequence such that $U(h_i)=U(h_j)$ if and only if
$\exp(H_i)=\exp(H_j)$. Moreover, it follows by (b) that
$U(h_n)\leq U(y)$ for all $y\in H$.

If $U$ is an Ulm sequence, we denote, as in \cite[p.100]{HRW77},
$$H(U)=\{x\in H:\, U(x)\geq U\},$$ $$H(U)^*=\{x\in H:\,
U(x)> U\}$$ and we consider the $\Z(p)$-vector space
$$H_U=\frac{H(U)+pH}{H(U    )^*+pH}.$$ Recall that a
\textsl{$v$-basis} for $H$ is constructed in the following way:
for every $U    $ we fix a basis in $H_U    $, and we choose one
representative whose Ulm sequence is $U    $ for each element in
this basis; the union of all these representatives is a $v$--basis
for $H$. It is proved in \cite[Theorem 3]{HRW77} that $H$ is a
valuated direct sum of cyclics  if and only if the cardinal of a
$v$--basis coincides to the rank of $H$. Moreover, in this
hypothesis every $v$--basis is linearly independent and it
generates $H$. Therefore, it is enough to prove that $\{h_i:\,
i=1,\dots,n\}$ is a $v$--basis for $H$.

Since $K=\bigoplus_{i=1}^{n-1} H_i$ is a direct sum of cyclic
valuated groups, it follows that the set $\{h_i:\,
i=1,\dots,n-1\}$ is a $v$--basis for $K$.

Let $V    $ be the Ulm sequence      of $h_n$. If $U< V    $ then
$H(U)=H(U    )^*=H$, so $H_U    =0$. If $U    $ and $V    $ are
not comparable then $H(U    )=H(U    )^*$ since $V    $ is minimal
as Ulm sequence      of an element of $H$, so $H_U    =0$.

It is easy to see that $H(V    )=H$, and $H(V
)^*=(\bigoplus_{i<k}H_i)\oplus(\bigoplus_{i=k}^n pH_i)$, so
$h_{k},\dots,h_{n}$ represent a basis in $H_V    $.

If $V    <U    $ then $$\textstyle{H(U )+pH=(\bigoplus_{U(h_i)\geq
U }H_i)+pH=K(U    )\oplus pH_n}$$ and
$$\textstyle{H(U    )^*+pH=(\bigoplus_{U(h_i)> U    }H_i)+pH=K(U    )^*\oplus
pH_n.}$$ Therefore every set in $K=\bigoplus_{i=1}^{n-1} H_i$
which represents a basis in $K_U    $ is also a representative set
for a basis in $H_U    $.

It follows that $\{h_1,\dots,h_n\}$ is a $v$-basis, and an
application of the proof of \cite[Theorem 3]{HRW77} will complete
the proof.
\end{proof}

\begin{lemma}\label{basic-in}
Let $G$ be a $p$-group and $x,y\in G$. If $U(x+y)=U(x)$ then
$U(x)\leq U(y)$.
\end{lemma}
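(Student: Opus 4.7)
The plan is to deduce the componentwise inequality $U(x) \leq U(y)$ by reducing it to a basic fact about heights. First I would observe that since $U(x+y)=U(x)$, the two Ulm sequences have the same length, so $\exp(x+y)=\exp(x)=n$. This rules out $\exp(y)>n$, since otherwise $p^n(x+y)=p^ny\neq 0$ would force $\exp(x+y)>n$. Hence $\exp(y)\leq\exp(x)$, and for every index $k\geq\exp(y)$ the $k$th entry of $U(y)$ is $\infty$, so the required inequality $h(p^kx)\leq h(p^ky)$ is automatic.

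For the remaining indices $k<\exp(y)$ (for which both $p^kx$ and $p^ky$ are nonzero), the hypothesis gives
\[
h(p^kx+p^ky)=h(p^k(x+y))=h(p^kx).
\]
So it suffices to prove the following auxiliary claim: if $a,b\in G$ satisfy $h(a+b)=h(a)$, then $h(a)\leq h(b)$. Once this claim is applied to $a=p^kx$ and $b=p^ky$ for each relevant $k$, the conclusion $U(x)\leq U(y)$ drops out immediately.

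The auxiliary claim is the heart of the argument, and it follows from the standard non-archimedean behaviour of the height valuation on a $p$-group. Specifically, if $b=0$ there is nothing to show, since $h(b)=\infty\geq h(a)$. If $a+b=0$ then $b=-a$ and $h(b)=h(a)$. Otherwise $a+b\neq 0$, and I would argue by contradiction: if $h(a)>h(b)$, then because heights of distinct values obey $h(a+b)=\min\{h(a),h(b)\}$, we would get $h(a+b)=h(b)<h(a)$, contradicting $h(a+b)=h(a)$. Hence $h(a)\leq h(b)$.

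I do not expect any real obstacle; the main care is the dichotomy $a+b=0$ versus $a+b\neq 0$ in the auxiliary claim (so that the non-archimedean identity can be invoked), and the initial length check ensuring $\exp(y)\leq\exp(x)$ so that we only need to verify the height inequality at those indices where $p^ky\neq 0$.
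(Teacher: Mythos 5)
Your proposal is correct and rests on exactly the same key fact as the paper's proof, namely that $h(a+b)=\min\{h(a),h(b)\}$ whenever $h(a)\neq h(b)$; the paper simply argues the contrapositive in one step (if $h(p^ky)<h(p^kx)$ for some $k$, then $h(p^k(x+y))=h(p^ky)\neq h(p^kx)$, contradicting $U(x+y)=U(x)$), while you unpack the same argument index by index with the extra (harmless but unnecessary) bookkeeping about exponents. No gap; this matches the paper's approach.
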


\begin{proof}
Suppose that $U(x)\nleqq U(y)$. Then there exists a positive
integer $k$ such that $h(p^k y)<h(p^k x)$. It follows that
$h(p^k(x+y))=h(p^k y)\neq h(p^k x)$, and this contradicts our
hypothesis.
\end{proof}

\begin{theorem}\label{Pf=Qf}
Let $G$ be a group. Then $\calP_f(G)=\calQ_f(G)$.
\end{theorem}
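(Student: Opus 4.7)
The inclusion $\calQ_f(G)\subseteq\calP_f(G)$ is immediate from the general chain of inclusions noted in the introduction, so the task is to establish $\calP_f(G)\subseteq\calQ_f(G)$. My plan is to reduce to the case where $G$ is a $p$-group and $H$ is a finite $p$-subgroup, and then apply the preceding lemmas. For the reduction, write a finitely generated $H\in\calP_f(G)$ as $H=F\oplus\bigoplus_p T_p$ via the primary decomposition; extending a monomorphism $T_p\to G$ by the identity on the complementary summand of $H$ yields a monomorphism $H\to G$ (the free summand cannot absorb a nontrivial torsion element), so each $T_p\in\calP_f(G)$, and since $T_p(G)$ is fully invariant this further gives $T_p\in\calP_f(T_p(G))$, reducing to the $p$-group case (with the torsion-free summand $F$ handled separately).

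In the $p$-group case, let $G$ be a $p$-group and $H\in\calP_f(G)$ a finite subgroup. I would construct a cyclic decomposition $H=\bigoplus_{i=1}^n\la h_i\ra$ satisfying the hypotheses (i)--(iii) of Lemma~\ref{lemma-dsvc} via a greedy selection: at each stage pick from the current complement an element of maximal exponent and, subject to that, minimal $G$-Ulm sequence, then split off the cyclic subgroup it generates (a direct summand because its exponent is maximal in the bounded $p$-group being decomposed). Condition~(i) holds by the ordering, and condition~(ii) follows from the valuation inequality: if $x\in\bigoplus_{i\leq m}\la h_i\ra$ with $\exp(x)<\exp(h_m)$, then $h_m+x$ still has the maximal exponent in the current complement, so $U(h_m+x)\geq U(h_m)$ by minimality, whence $U(x)=U((h_m+x)-h_m)\geq\min\{U(h_m+x),U(h_m)\}=U(h_m)$ (the case $\exp(x)=\exp(h_m)$ is direct). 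The crucial step is condition~(iii): if $\exp(h_i)=\exp(h_j)$, the involution of $H$ swapping $h_i$ and $h_j$ and fixing the remaining generators is a monomorphism $H\to G$, so by the hypothesis $H\in\calP(G)$ it extends to some $f\in\End(G)$; since endomorphisms of $p$-groups do not decrease heights, the relations $f(h_i)=h_j$ and $f(h_j)=h_i$ force $U(h_i)=U(h_j)$.

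With (i)--(iii) verified, Lemma~\ref{lemma-dsvc} gives that the decomposition is a valuated direct sum of cyclic groups. Each cyclic summand $\la h_i\ra$ is a cyclic direct summand of $H\in\calP(G)$, hence lies in $\calP(G)$ by the first lemma of this section, which for cyclic subgroups coincides with $\calQ(G)$ by Theorem~\ref{3.14} (together with Lemma~\ref{sufficient} to handle the case $\la h_i\ra\cap G^1\neq 0$). Theorem~\ref{finite-Q(G)} applied to this valuated direct sum of $\calQ(G)$-cyclic summands then yields $H\in\calQ(G)\subseteq\calQ_f(G)$. The chief obstacle I expect is the reduction to the $p$-group case: reassembling the individually extended primary endomorphisms into a single endomorphism of $G$ is delicate because $T(G)$ need not split off from $G$, and the torsion-free summand $F$ requires its own separate argument to ensure that homomorphisms from $F$ can be extended once monomorphisms can.
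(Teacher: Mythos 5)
Your $p$-group argument follows the same skeleton as the paper's (order the cyclic generators by exponent, verify the hypotheses of Lemma~\ref{lemma-dsvc} using automorphisms of $H$ that must extend to $\End(G)$, and conclude via Theorem~\ref{3.14} and Theorem~\ref{finite-Q(G)}), and your verification of condition~(iii) via the swap involution is essentially the paper's. But your verification of condition~(ii) has a gap: the pointwise order on Ulm sequences is only a \emph{partial} order, so an element of maximal exponent with \emph{minimal} Ulm sequence need not satisfy $U(h_m)\leq U(z)$ for every $z$ of maximal exponent in the current complement --- a minimal element is not a minimum --- and the step ``$U(h_m+x)\geq U(h_m)$ by minimality'' is therefore unjustified when the two sequences are incomparable. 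The paper does not rely on any extremal choice of generators: it derives (ii) for an \emph{arbitrary} exponent-ordered decomposition directly from $H\in\calP(G)$, by noting that the automorphism of $H$ fixing $h_i$ for $i\neq m$ and sending $h_m\mapsto h_m+h_j$ ($j<m$) and its inverse both extend to $\End(G)$, which forces $U(h_m)=U(h_m+h_j)$, and then Lemma~\ref{basic-in} gives $U(h_m)\leq U(h_j)$. Your construction can be repaired along the same lines (under $H\in\calP(G)$ any two generators of cyclic direct summands of equal order are linked by an automorphism of $H$ and hence have equal Ulm sequences), but as written (ii) does not follow from the greedy construction alone.

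The more serious omission is the mixed case. You correctly identify that $T(G)$ need not split off and that the free summand $F$ needs its own argument, but you leave both unresolved, and this is roughly half of the paper's proof. The paper writes $H=F\oplus K$ with $F$ free of finite rank and $K$ finite, shows $K\in\calQ(T(G))$ and embeds it in a finite summand, and then proves that every homomorphism $\varphi:F\to G$ extends to an endomorphism of $G$ annihilating $K$: the subgroups $U_i=\{x\in F\mid\varphi(x)=ix\}$ form a direct sum inside the finite-rank group $F$, so almost all vanish, whence for a suitable prime $q$ the map $\psi=\varphi-q\cdot 1$ is monic on $F$; one extends the resulting monomorphism of $H$ (sending $K$ by $-q$) using $H\in\calP(G)$ and corrects by $q1_G$. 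Homomorphisms from $K$ are then extended so as to annihilate $F$ by subtracting such an extension, and the two pieces are added. Without an argument of this kind your proposal establishes the theorem only for $p$-primary (and, modulo the reassembly you flag, torsion) groups $G$, not for arbitrary abelian groups as the statement requires.
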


\begin{proof} Since only the inclusion $\calP_f(G)\subseteq \calQ_f(G)$ requires a proof, we
start with a finitely generated subgroup $H\in\calP(G)$.

If $G$ is a $p$-group, we write $H=\bigoplus_{i=1}^n H_i$ such
that all $H_i=\langle h_i\rangle$ are cyclic groups with
$$\exp(h_1)\leq \exp(h_2)\leq\dots\leq\exp(h_n).$$ We will prove
that this decomposition satisfies the conditions (ii) and (iii)
from Lemma \ref{lemma-dsvc}.

Let $m\in\{1,\dots,n\}$ and $j<m$. Then $H=\bigoplus_{i=1}^n
H'_i$, where $H'_i=\langle h_i\rangle$ for all $i\neq m$ and
$H'_i=\langle h_m+h_j\rangle$. Since $H\in\calP(G)$, the
isomorphism $\varphi:H\to H$ defined by $\varphi(h_i)=h_i$ for all
$i\neq m$ and $\varphi(h_m)=h_m+h_j$ can be extended to an
endomorphism of $G$. Then $U(h_m)\leq U(h_m+h_j)$. But
$\varphi^{-1}$ also can be extended to an endomorphism of $G$,
hence $U(h_m)\geq U(h_m+h_j)$. Therefore, $U(h_m)= U(h_m+h_j)$,
and applying Lemma \ref{basic-in} we obtain that the condition
(ii) is satisfied.

In order to prove (iii), it is enough to observe that if
$\exp(h_i)=\exp(h_j)$ with $i<j$ we can replace in the direct
decomposition of $H$ as direct sum of cyclic groups either of the
two direct summands $\langle h_i\rangle$ or $\langle h_j\rangle$
by $\langle h_i+h_j\rangle$. By what we just proved for (ii) we
have $U(h_i)=U(h_j)$.

Therefore, we proved that $\calQ_f(G)=\calP_f(G)$ for all
$p$-groups. It is not hard to extend this property to all torsion
groups and we now show that result can be extended to all abelian
groups.

Let $G$ be a group and $H\in \calP_f(G)$. Since $H$ is finitely
generated, $H=F\oplus K$, with $F$ a free subgroup of finite rank
and $K$ a finite subgroup. We claim that $F$ and $K$ are in
$\calQ(G)$ and every homomorphism $\varphi:F\to G$ can be extended
to an endomorphism $\overline{\varphi}$ of $G$ such that
$\overline{\varphi}(K)=0$.

Since $K\leq T(G)$, every monomorphism $\varphi:K\to G$ can be
extended to a monomorphism $\varphi':H\to G$ such that
$\varphi'(x)=x$ for all $x\in F$. Then there is an endomorphism
$\overline{\varphi}$ of $G$ which extends $\varphi'$. Since $T(G)$
is fully invariant, $\overline{\varphi}$ induces an endomorphism
of $G$ which extends $\varphi$. Therefore
$K\in\calP_f(T(G))=\calQ_f(T(G))$, and it follows that we can embed
$K$ in a finite direct summand $L$ of $G$.

In order to prove $F\in\calQ(G)$, let $\varphi:F\to G$ be a
homomorphism. For every positive integer $i$ we consider the
subgroup $$\textstyle{U_i=\{x\in F\mid \varphi(x)=ix\}\leq F,}$$
and we will prove by induction on $n$ that
$$\textstyle{\sum_{i=1}^nU_i=\bigoplus_{i=1}^nU_i}$$ for all $n>0$. Since the case
$n=1$ is obvious, suppose that
$\sum_{i=1}^nU_i=\bigoplus_{i=1}^nU_i$. Let $x\in
(\sum_{i=1}^nU_i)\cap U_{n+1}$. Then $x=\sum_{i=1}^nx_i$ with
$x_i\in U_i$, hence
$$\textstyle{(n+1)\sum_{i=1}^nx_i=\varphi(x)=\sum_{i=1}^n\varphi(x_i)=\sum_{i=1}^nix_i.}$$
Then $\sum_{i=1}^n(n+1-i)x_i=0$, and by the induction hypothesis
$(n+1-i)x_i=0$ for all $i=1,\dots,n$. Since $F$ is torsion free,
it follows that $x=0$. Then $\sum_{i>0}U_i=\bigoplus_{i>0}U_i\leq
F$. But $F$ is of finite rank, hence we can find an integer $N>0$
such that $U_n=0$ for all $n\geq N$. Let $q>N$ be a prime such
that $\gcd(q,|K|)=0$. Therefore the homomorphism $\psi:F\to G$,
$\psi(x)=\varphi(x)-qx$ is a monomorphism. Then $\psi(F)$ is
torsion--free, and as in the first part of the proof it can be
extended to a monomorphism $\psi':H\to G$ such that $\psi'(x)=-qx$
for all $x\in K$. Since $H\in\calP(G)$ there is an endomorphism
$\overline{\psi}$ of $G$ which extends $\psi'$. Then
$\overline{\varphi}=\overline{\psi}+q1_G$ is an endomorphism of
$G$ which extends $\varphi$ and $\overline{\varphi}(K)=0$.

Now we will prove that every homomorphism $\varphi:K\to G$ can be
extended to an endomorphism $\overline{\varphi}$ of $G$ such that
$\overline{\varphi}(F)=0$. Let $\varphi:K\to G$ be a homomorphism.
If $\varphi':G\to G$ extends $\varphi$ then the restriction
$\varphi'|_{F}:F\to G$ can be extended to an endomorphism $\psi$
of $G$ such that $\psi(K)=0$. Then
$\overline{\varphi}=\varphi'-\psi$ has the required properties.

In order to complete the proof, let $\varphi:\,H\to G$ be a
homomorphism. Then it induces by restrictions two homomorphisms
$\varphi_1:F\to G$ and $\varphi_2:K\to G$. But by what we just proved
we can extend these homomorphisms to two endomorphisms
$\overline{\varphi}_1$ and $\overline{\varphi}_2$ of $G$ such that
$\overline{\varphi}_1$ extends $\varphi_1$ and
$\overline{\varphi}_1(K)=0$, and $\overline{\varphi}_2$
extends ${\varphi}_2$ and $\overline{\varphi}_2(F)=0$. Then
$\overline{\varphi}=\overline{\varphi}_1+\overline{\varphi}_2$
extends $\varphi$ and the proof is complete.
\end{proof}

\begin{remark}
In the case $G$ is a $p$-group with $p\geq 3$ there is a   simpler
proof. In fact, in order to prove $H\in \calQ(G)$ for all finite
subgroups $H\in\calQ(G)$ it is enough to prove that Lemma
\ref{valuated} is valid for $K\oplus L\in\calP(G)$. For the case
$p\geq 3$ it is easy to see that the homomorphism $\varphi:K\to
L\to G$, $\varphi(k,\ell)=2k+\ell$ is a monomorphism, hence it can
be extended to an endomorphism $\overline{\varphi}$ of $G$. Then
$\overline{\varphi}-1_G$ extends the canonical projection $K\oplus
L\to K$ and the proof presented for Lemma \ref{valuated} also
works for $\calP(G)$.
\end{remark}

A careful analysis of the previous proof reveals the fact that $F$
can be replaced by any finite rank torsion-free group. Therefore
we obtain:

\begin{corollary}
 Let $G$ be a group and $H$ a  subgroup of
$G$ of finite rank. Then $H\in\calQ(G)$ if and only if $H\in\calP(G)$.
\end{corollary}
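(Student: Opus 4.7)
The inclusion $\calQ(G) \subseteq \calP(G)$ is immediate. For the converse, suppose $H \in \calP(G)$ has finite rank; the plan, as anticipated by the preceding remark, is to adapt the proof of Theorem~\ref{Pf=Qf} by replacing the free-of-finite-rank torsion-free summand $F$ used there with an arbitrary torsion-free group of finite rank.

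First I would produce a decomposition $H = F \oplus K$ with $F$ torsion-free of finite rank and $K$ finite. Under the hypothesis that $H$ has finite rank, $T(H)$ is finite, hence bounded, so Baer's splitting criterion yields $H = T(H) \oplus F$ with $F \cong H/T(H)$ torsion-free of finite rank.

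Next I would check that the argument establishing $F \in \calQ(G)$ in the proof of Theorem~\ref{Pf=Qf} uses the freeness of $F$ only through two features, namely that $F$ is torsion-free and that $F$ has finite rank. Given $\varphi \in \Hom(F,G)$, the subgroups $U_i = \{x \in F : \varphi(x) = ix\}$ sum directly because torsion-freeness is exactly what lets the inductive step conclude $(n+1-i)x_i = 0 \Rightarrow x_i = 0$, while finite rank forces $U_n = 0$ for all $n$ beyond some bound $N$. Choosing a prime $q > N$ with $\gcd(q, |K|) = 1$, the map $\psi(x) = \varphi(x) - qx$ is a monomorphism whose image is torsion-free, so $\psi'(f+k) = \psi(f) - qk$ is a monomorphism $H \to G$. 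Because $H \in \calP(G)$, some $\overline{\psi} \in \End(G)$ extends $\psi'$, and $\overline{\psi} + q\cdot 1_G$ is an extension of $\varphi$ that kills $K$. The summand $K$ is handled exactly as in Theorem~\ref{Pf=Qf} via $K \in \calP_f(T(G)) = \calQ_f(T(G))$, producing extensions that kill $F$. Summing the two partial extensions extends any $\varphi \in \Hom(H,G)$, so $H \in \calQ(G)$.

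The only genuinely new step is the initial splitting $H = F \oplus K$; after that, the proof is essentially a transcription of the one for Theorem~\ref{Pf=Qf} in which the word ``free'' is weakened to ``torsion-free'' wherever it appears. I expect no serious obstacle beyond carefully verifying each such substitution, since finite rank and torsion-freeness are precisely the two features actually exploited in the original argument.
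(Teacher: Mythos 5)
Your overall route is exactly the paper's: the paper's entire justification of this corollary is the remark that, in the proof of Theorem \ref{Pf=Qf}, the free summand $F$ may be replaced by an arbitrary torsion-free group of finite rank, and your verification that the $F$-part of that proof uses only torsion-freeness (for the inductive step forcing $x_i=0$) and finite rank (for the vanishing of $U_n$ for large $n$) is correct and is precisely the intended argument.

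The step that does not hold as stated is the very first one: ``$H$ has finite rank, hence $T(H)$ is finite.'' Finite rank, in the sense $r_0(H)+\sum_p r_p(H)<\infty$, does not bound the torsion subgroup: $\bbZ(p^\infty)$ has rank $1$. What finite rank actually gives is that $T(H)$ is a finite direct sum of cyclic and quasicyclic groups, i.e.\ $T(H)=D\oplus B$ with $D$ divisible and $B$ finite. The splitting $H=T(H)\oplus F$ survives (the divisible part splits off, and the remaining torsion is bounded, so Baer applies), but your $K=T(H)$ may then be infinite, and the material you import verbatim from Theorem \ref{Pf=Qf} to handle $K$ --- the appeal to $K\in\calP_f(T(G))=\calQ_f(T(G))$, the embedding of $K$ into a \emph{finite} direct summand $L$ of $G$, and the choice of a prime $q$ with $\gcd(q,|K|)=1$ --- presupposes $K$ finite. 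The repair is routine but has to be made: peel off the divisible part $D$ separately (being injective and disjoint from $B\oplus F$, it is a summand of $G$ admitting a complement containing $B\oplus F$, so maps out of $D$ extend via the projection and kill the rest of $H$), and choose $q$ coprime to the finitely many primes occurring in $T(H)$ rather than to an order $|K|$ that need not exist. To be fair, the paper's own one-sentence justification glosses over the same point; but as written your claim that $T(H)$ is finite is false, and several later steps lean on it.
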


We have been unable to determine whether   Theorem \ref{Pf=Qf} can
be extended to all subgroups in $\calP(G)$, i.e. if there exists
an abelian group $G$ such that $\calQ(G)\neq \calP(G)$.

\end{document}